\newtheorem{theorem}{Theorem}
\newtheorem{lemma}[theorem]{Lemma}
\newtheorem{corollary}[theorem]{Corollary}
\theoremstyle{definition}
\theoremstyle{remark}
\newtheorem{remark}[theorem]{Remark}
\newcommand{\N}{\mathbb{N}}  
\newcommand{\Z}{\mathbb{Z}}  
\newcommand{\eps}{\varepsilon} 
\newcommand{\ov} {\overline}   
\newcommand{\wt} {\widetilde}  
\newcommand{\cA}{\mathcal{A}}
\newcommand{\cB}{\mathcal{B}}
\newcommand{\cC}{\mathcal{C}}
\newcommand{\cD}{\mathcal{D}}
\newcommand{\cH}{\mathcal{H}}
\newcommand{\cK}{\mathcal{K}}
\newcommand{\cM}{\mathcal{M}}
\newcommand{\p} {\mathcal{P}}
\newcommand{\q} {\mathcal{Q}}
\newcommand{\cS}{\mathcal{S}}
\newcommand{\udens}{\operatorname{\overline{dens}}}
\newcommand{\gr}  {\operatorname{Gr}}
\newcommand{\mesh}{\operatorname{mesh}}
\newcommand{\diam}{\operatorname{diam}}
\newcommand{\ent} {\operatorname{ent}}
\newcommand{\card}{\operatorname{card}}
\newcommand{\cantor}    {{\{0,1\}^\N}}
\newcommand{\contcantor}{\cC(\cantor)}
\newcommand{\homcantor} {\cH(\cantor)}
\newcommand{\mcantor}   {\cM(\cantor)}
\newcommand{\edge}[1]{\ensuremath{\overrightarrow  {#1 }}} 
\begin{document}

\title{On the Dynamics of Induced Maps\\ on the Space of Probability Measures}

\author{Nilson C. Bernardes Jr. \ and \ R\^omulo M. Vermersch}

\date{ }

\maketitle

\begin{abstract}
For the generic continuous map and for the generic homeomorphism of the
Cantor space, we study the dynamics of the induced map on the space of
probability measures, with emphasis on the notions of Li-Yorke chaos,
topological entropy, equicontinuity, chain continuity, chain mixing,
shadowing and recurrence. We also establish some results concerning
induced maps that hold on arbitrary compact metric spaces.
\footnote{
2010 {\em Mathematics Subject Classification:} Primary 37B99, 54H20;
  Secondary 54E52, 60B10, 28A33.

{\em Keywords:} Cantor space, continuous maps, homeomorphisms,
  probability measures, Prohorov metric, dynamics.}
\end{abstract}


\section{Introduction}

\hspace*{4mm} Let $M$ be a compact metric space with metric $d$ and let
$\cB_M$ be the set of all Borel subsets of $M$.
We denote by $\cC(M)$ (resp.\ $\cH(M)$) the space of all continuous maps
from $M$ into $M$ (resp.\ of all homeomorphisms from $M$ onto $M$)
endowed with the metric
$$
\tilde{d}(f,g):= \max_{x \in M} d(f(x),g(x)).
$$
Moreover, we denote by $\cK(M)$ the hyperspace of all nonempty closed subsets
of $M$ endowed with the Hausdorff metric
$$
d_H(X,Y):= \max \big\{\max_{x \in X} d(x,Y),\max_{y \in Y} d(y,X) \big\},
$$
and by $\cM(M)$ the space of all Borel probability measures on $M$
endowed with the Prohorov metric
$$
d_P(\mu,\nu):= \inf\{\delta > 0 : \mu(X) \leq \nu(X^\delta) + \delta
               \text{ and } \nu(X) \leq \mu(X^\delta) + \delta
               \text{ for all } X \in \cB_M\},
$$
where $X^\delta:= \{x \in M : d(x,X) < \delta\}$ is the $\delta$-neighborhood
of $X$ ($X \subset M$). The Prohorov metric induces the usual weak topology
for measures. It is well known that both $\cK(M)$ and $\cM(M)$ are compact
metric spaces. Moreover,
$$
d_P(\mu,\nu) = \inf\{\delta > 0 : \mu(X) \leq \nu(X^\delta) + \delta
               \text{ for all } X \in \cB_M\}
$$
(\cite{PBil99}, Page 72).
Given $f \in \cC(M)$, the induced maps $\ov{f} : \cK(M) \to \cK(M)$ and
$\wt{f} : \cM(M) \to \cM(M)$ are the continuous maps given by
$$
\ov{f}(X):= f(X) \ \ \ \ \ (X \in \cK(M))
$$
and
$$
(\wt{f}(\mu))(X):= \mu(f^{-1}(X)) \ \ \ \ \ (\mu \in \cM(M), X \in \cB_M).
$$
If $f$ is a homeomorphism, then so are $\ov{f}$ and $\wt{f}$.
We refer the reader to the books \cite{AIllSNad99} and \cite{PBil99}
for a study of the spaces $\cK(M)$ and $\cM(M)$, respectively.

\smallskip
Given a Baire space $Z$, to say that ``the generic element of $Z$ has a
certain property $P$'' means that the set of all elements of $Z$ that
do not satisfy property $P$ is of the first category in $Z$.
The word ``typical'' is sometimes used instead of the word ``generic''.

\smallskip
A systematic study of the dynamics of the induced maps $\ov{f}$ and $\wt{f}$
was initiated by Bauer and Sigmund \cite{WBauKSig75} and has been developed
by several authors; see \cite{AcoIllMen09, EGlaBWei95, GuiKwiLamOprPer09,
KSig78}, for instance. 
On the other hand, the study of generic dynamics is a classical topic
in the area of dynamical systems. In the context of topological dynamics,
such a study has been developed by several authors during the last forty
years. We refer the reader to \cite{AgrBruLac89, AkiHurKen03, NBer08, NBer13,
NBerUDar12, SPil94}, where further references can be found.

\smallskip
In \cite{NBerRVer14} the authors combined both topics and developed
a detailed study of the dynamics of the induced map $\ov{f}$ in the case
$f$ is the generic continuous map or the generic homeomorphism of the
Cantor space. Our main goal in the present paper is to develop such a
detailed study for the induced map $\wt{f}$. It turns out that in many
aspects the dynamics of the induced map $\wt{f}$ is completely different from
the dynamics of the induced map $\ov{f}$. For instance, for the generic
homeomorphism $f$ of the Cantor space, it was proved in \cite{NBerRVer14}
that the induced map $\ov{f}$ is uniformly distributionally chaotic,
has infinite topological entropy, has the shadowing property and is
chain continuous at every point of a dense open set, but we will see that
the induced map $\wt{f}$ has no Li-Yorke pair, has zero topological entropy,
does not have the shadowing property and is chain continuous at no point
at all.

\smallskip
We also consider the problem of the density of the set of periodic points
in the set of nonwandering points. The {\em $C^1$ closing lemma} and
the associated {\em $C^1$ general density theorem} are fundamental results
in the theory of smooth dynamical systems due to Pugh~\cite{CPug67}.
The former says that if $x$ is a nonwandering point of a diffeomorphism
$f$ on a compact smooth manifold $M$, then in any $C^1$ neighborhood
of $f$ there is a diffeomorphism $g$ for which $x$ is a periodic point.
The latter says that $C^1$ generically the set of periodic points of a
diffeomorphism is dense in the set of its nonwandering points.
The corresponding {\em $C^0$ general density theorem} for homeomorphisms
on a compact smooth manifold $M$ was announced by Palis, Pugh, Shub and
Sullivan \cite{PalPugShuSul75}, but a flaw in their argument was later
described by Coven, Madden and Nitecki \cite{CovMadNit82}, who proposed
a different argument. However, it was pointed out by Pilyugin \cite{SPil94}
that the argument in \cite{CovMadNit82} only works in the $C^0$ closure
of the set of diffeomorphisms in the set of homeomorphisms on $M$.
By Munkres \cite{JMun60} and
Whitehead \cite{JWhi61}, this $C^0$ closure is equal to the set of all
homeomorphisms on $M$ whenever $\dim M \leq 3$, but Munkres \cite{JMun60}
also showed that this is not the case if $\dim M > 3$. A proof of the full
$C^0$ general density theorem was finally given by Hurley \cite{MHur96},
who also observed how to adapt the argument to obtain the corresponding
result for continuous maps. In the case of the Cantor space the situation
is completely different: the generic continuous map and the generic
homeomorphism of the Cantor space have no periodic point! This was observed
by D'Aniello and Darji \cite{EDAnUDar11} and by Akin, Hurley and Kennedy
\cite{AkiHurKen03}, respectively. Nevertheless, surprisingly enough,
we will see that for the generic continuous map (resp.\ the generic
homeomorphism) $f$ of the Cantor space, the set of periodic points
of the induced map $\wt{f}$ is dense in the set of its nonwandering points.
Moreover, several additional properties of the induced map $\wt{f}$ related
to recurrence will also be established.

\smallskip
Although our main motivation was to study the dynamics of the induced map
$\wt{f}$ in the case $f$ is the generic map of the Cantor space, in doing so
we have also established some results that hold on arbitrary compact metric
spaces. For instance, we will see that for any homeomorphism $f$ of any
compact metric space $M$, the following properties hold:
\begin{itemize}
\item $\wt{f}$ is chain mixing.
\item $\wt{f}$ has no point of chain continuity (provided $M$ is not a
      singleton).
\end{itemize}
Moreover, for any continuous map $f$ of any compact metric space $M$,
the following assertions are equivalent:
\begin{description}
\item {(i)}   $\wt{f}$ is chain continuous at some point;
\item {(ii)}  $\wt{f}$ is chain continuous at every point;
\item {(iii)} $\bigcap_{n=1}^\infty f^n(M)$ is a singleton.
\end{description}
Such results complement the previous works of
Bauer and Sigmund \cite{WBauKSig75}, Sigmund \cite{KSig78} and
Glasner and Weiss \cite{EGlaBWei95} on the dynamics of the induced map
$\wt{f}$ in the context of general compact metric spaces.


\section{Preliminaries}

\hspace*{4mm} Throughout $M$ denotes an arbitrary compact metric space
with metric $d$. For each $x \in M$ and each $r > 0$,
$B(x;r):= \{y \in M : d(y,x) < r\}$ is the open ball with center
$x$ and radius $r$.

\smallskip
Our model for the Cantor space is the product space $\cantor$,
where $\{0,1\}$ is endowed with the discrete topology. We consider
$\cantor$ endowed with the compatible metric, also denoted by $d$, given by
$d(\sigma,\sigma):= 0$ and $d(\sigma,\tau):= \frac{1}{n}$ where $n$
is the least positive integer such that $\sigma(n) \neq \tau(n)$
($\sigma, \tau \in \cantor$, $\sigma \neq \tau$).

\smallskip
The main tools used in the present paper for the results concerning the
generic continuous map and the generic homeomorphism of the Cantor space are
the graph theoretic descriptions of these maps obtained in \cite{NBerUDar12}.
For the sake of completeness, let us briefly recall these descriptions. 

\smallskip
A {\em partition} of $\cantor$ is a finite collection $\p$ of pairwise
disjoint nonempty clopen sets whose union is $\cantor$, and $\mesh(\p)$
is the maximum diameter of the elements of $\p$.
For each $f \in \contcantor$ and each partition $\p$ of $\cantor$,
we consider the digraph $\gr(f,\p)$ whose vertex set is $\p$ and
whose edge set is
$$
\{\edge{ab} : a,b \in \p \text{ and } f(a) \cap b \neq \emptyset\}.
$$

A {\em component} of a digraph $G$ is a largest (in vertices and edges)
subgraph $H$ of $G$ such that given any two vertices $a,b$ in $H$,
there are vertices $a_1,\ldots,a_n$ in $H$ such that $a_1 = a$, $a_n = b$
and, for each $1 \leq i < n$, $\edge{a_ia_{i+1}}$ or $\edge{a_{i+1}a_i}$
is an edge of $H$.

\smallskip
A digraph $\ell$ is a {\em loop of length $n$} if the vertex set of $\ell$
is a set $\{v_1,\ldots,v_n\}$ with $n$ elements and the edges of $\ell$
are $\edge{v_nv_1}$ and $\edge{v_iv_{i+1}}$ for $1 \leq i < n$.

\smallskip
A digraph $B$ is a {\em balloon of type $(s,t)$} if the vertex set of $B$
is the union of two disjoint sets $p = \{v_1,\ldots,v_s\}$ and
$\ell = \{w_1,\ldots,w_t\}$, and the edges of $B$ are
the edges of the path $p$ (i.e., $\edge{v_iv_{i+1}}$ for $1 \leq i < s$),
the edges of the loop formed by $\ell$, and $\edge{v_sw_1}$.
We call $v_1$ the {\em initial vertex} of $B$.

\smallskip
A digraph $D$ is a {\em dumbbell of type $(r,s,t)$} if the vertex set of $D$
is the union of three disjoint sets $\ell_1 = \{u_1,\ldots,u_r\}$,
$p = \{v_1,\ldots,v_s\}$ and $\ell_2 = \{w_1,\ldots,w_t\}$, and the edges of
$D$ are the edges of the loops formed by $\ell_1$ and $\ell_2$,
the edges of the path $p$, $\edge{u_1v_1}$ and $\edge{v_sw_1}$.
If $r = t$ then we say that the dumbbell is {\em balanced with plate weight
$r$}.

\smallskip
Suppose that $f \in \contcantor$, $\p$ is a partition of $\cantor$ and
$B$ is a component of $\gr(f,\p)$ which is a balloon.
Write
$$
B = \{v_1,\ldots,v_s\} \cup \{w_1,\ldots,w_t\},
$$
with usual labeling. We say that the balloon $B$ is {\em strict relative to
$f$} if $f(v_i) \subsetneq v_{i+1}$ for every $1 \leq i < s$,
$f(w_j) \subsetneq w_{j+1}$ for every $1 \leq j < t$, and
$f(v_s) \cup f(w_t) \subsetneq w_1$.

\smallskip
Suppose that $h \in \homcantor$, $\p$ is a partition of $\cantor$ and
$D$ is a component of $\gr(h,\p)$ which is a dumbbell.
Write
$$
D = \{u_1,\ldots,u_r\} \cup \{v_1,\ldots,v_s\} \cup \{w_1,\ldots,w_t\},
$$
with usual labeling. We say that the dumbbell $D$ {\em contains a left loop
of $h$} (resp.\ {\em a right loop of $h$}) if there is a nonempty clopen
subset $a$ of $u_1$ (resp.\ of $w_1$) such that $h^r(a) = a$
(resp.\ $h^t(a) = a$).

\smallskip
Let us now recall the above-mentioned results from \cite{NBerUDar12}:

\vspace{.1in}
\noindent {\bf Theorem A.} {\it The generic $f \in \contcantor$ has the
following property:

\smallskip
\noindent {\rm (Q)} For every $m \in \N$, there are a partition $\p$
of $\cantor$ of mesh $< 1/m$ and a multiple $q \in \N$ of $m$ such that
every component of $\gr(f,\p)$ is a balloon of type $(q!,q!)$
which is strict relative to $f$.}

\vspace{.1in}
\noindent {\bf Theorem B.} {\it The generic $h \in \homcantor$ has the
following property:

\smallskip
\noindent {\rm (P)} For every $m \in \N$, there are a partition $\p$
of $\cantor$ of mesh $< 1/m$ and a multiple $q \in \N$ of $m$ such that
every component of $\gr(h,\p)$ is a balanced dumbbell with plate weight $q!$
that contains both a left and a right loop of $h$.}

\vspace{.1in}
Moreover, it was proved in \cite{NBerUDar12} that any two maps
$f, g \in \contcantor$ (resp.\ $f, g \in \homcantor$)
with property (Q) (resp.\ property (P)) are topologically conjugate
to each other, that is, $f = h^{-1} g h$ for some $h \in \homcantor$.

\smallskip
Given a partition $\p$ of $\cantor$, we define
$$
\delta(\p):= \min\{d(a,b) : a,b \in \p, a \neq b\} > 0
$$
and
$$
I_\p(X):= \{a \in \p : a \cap X \neq \emptyset\} \ \ \ (X \subset \cantor).
$$

\smallskip
For each $z \in M$, $\pi_z \in \cM(M)$ denotes the unit mass concentrated
at $z$. Note that
$$
d_P(\pi_z,\pi_w) = \min\{d(z,w),1\}.
$$
Moreover, for every $f \in \cC(M)$,
$$
\wt{f}(\pi_z) = \pi_{f(z)}.
$$


\section{Li-Yorke chaos and topological entropy}

\hspace*{4mm}
Let us begin by recalling the notions of Li-Yorke chaos \cite{TLiJYor75}
and distributional chaos \cite{BSchJSmi94}.
Given $f \in \cC(M)$, recall that $(x,y)$ is a {\em Li-Yorke pair} for $f$ if
$$
\liminf_{n \to \infty} d(f^n(x),f^n(y)) = 0 \ \ \ \text{ and } \ \ \
\limsup_{n \to \infty} d(f^n(x),f^n(y)) > 0.
$$
The map $f$ is {\em Li-Yorke chaotic} if there is an uncountable set
$S$ (a {\em scrambled set} for $f$) such that $(x,y)$ is a Li-Yorke pair
for $f$ whenever $x$ and $y$ are distinct points in~$S$. Moreover,
$(x,y)$ is a {\em distributionally $\eps$-chaotic pair} for $f$ ($\eps > 0$)
if
$$
\udens\{n \in \N : d(f^n(x),f^n(y)) \geq \eps\} = 1
$$
and
$$
\udens\{n \in \N : d(f^n(x),f^n(y)) < \delta\} = 1,
$$
for all $\delta > 0$, where
$$
\udens(A):= \limsup_{n \to \infty} \frac{\card([1,n] \cap A)}{n}
$$
is the {\em upper density} of the subset $A$ of $\N$.
The map $f$ is {\em uniformly distributionally chaotic} if there is an
uncountable set $S$ (a {\em distributionally $\eps$-scrambled set} for $f$)
such that $(x,y)$ is a distributionally $\eps$-chaotic pair for $f$ whenever
$x$ and $y$ are distinct points in $S$.

\smallskip
It was proved in \cite{NBerRVer14} that for the generic $h \in \homcantor$,
the induced map $\ov{h}$ is uniformly distributionally chaotic. Surprisingly
enough, we shall now see that the induced map $\wt{h}$ is not even Li-Yorke
chaotic. In fact, even the following stronger statement holds.

\begin{theorem}\label{LYHom}
For the generic $h \in \homcantor$, $\wt{h}$ has no Li-Yorke pair.
\end{theorem}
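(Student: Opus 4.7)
My plan is to invoke Theorem~B to reduce the problem to showing that every $h\in\homcantor$ with property~(P) has $\wt h$ free of Li--Yorke pairs. Fix such an $h$ and a pair $(\mu,\nu)\in\cM(\cantor)^2$ satisfying $\liminf_n d_P(\wt h^n\mu,\wt h^n\nu)=0$; I will prove the stronger conclusion $\lim_n d_P(\wt h^n\mu,\wt h^n\nu)=0$, which trivially rules out Li--Yorke pairs. Given $\eps>0$, I apply property~(P) with $m>2/\eps$ to obtain a partition $\p$ with $\mesh(\p)<\eps/2$ and a plate weight $r=q!$ such that every component $D$ of $\gr(h,\p)$ is a balanced dumbbell $\{u_1,\ldots,u_r\}\cup\{v_1,\ldots,v_s\}\cup\{w_1,\ldots,w_t\}$ containing both a left and a right loop of $h$.

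The key structural ingredient is a mass-flow analysis on each such $D$. Because $u_1$ is the only vertex with two outgoing edges (to $u_2$ and $v_1$), bijectivity of $h$ forces every other vertex to map homeomorphically onto its unique successor, while $u_1$ splits as the disjoint union $(u_1\cap h^{-1}(u_2))\cup(u_1\cap h^{-1}(v_1))$. Consequently the total left-loop mass $L_n^\mu:=\wt h^n\mu(\bigcup_{j=1}^r u_j)$ is nonincreasing in $n$, with per-step drop $\wt h^n\mu(u_1\cap h^{-1}(v_1))$; summability of the drops forces them to tend to zero, and since the bridge mass at time $n$ is a sum of at most $s$ recent drops, it also vanishes. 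The analogous facts hold for $\nu$ and for the right loop, so for large $n$ both $\wt h^n\mu$ and $\wt h^n\nu$ place mass at most $\eps/4$ outside the loop vertices.

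I then exploit proximality by passing to a subsequence $(n_k)$ along which $d_P(\wt h^{n_k}\mu,\wt h^{n_k}\nu)\to0$ and fixing a residue $j_0$ modulo $r$ occurring infinitely often. Each dumbbell $D$ and each clopen $h$-invariant orbit $\bigcup_{j=0}^{r-1}h^j(a_L)$ provided by property~(P) is fixed by $\wt h$, so the equalities $\mu(D)=\nu(D)$ and the analogous equalities for these loop orbits follow at once from $\wt h^{n_k}\mu(A)-\wt h^{n_k}\nu(A)\to0$. Refining to $h^r$-invariant clopens supported in the loop cores (using the pigeonhole residue $j_0$), the asymptotic mass distributions of $\wt h^n\mu$ and $\wt h^n\nu$ across the vertices $u_j,w_j$ coincide; since each such vertex has diameter $<\eps/2$, this implies $d_P(\wt h^n\mu,\wt h^n\nu)<\eps$ for all sufficiently large $n$. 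The principal technical difficulty lies in making this last step quantitative: the invariant cores $\bigcap_k h^{-kr}(u_j)$ are closed but generally not clopen, and the branching at $u_1$ refines recursively at every finer scale into sub-dumbbells. I expect to handle this obstacle by a diagonal argument across scales $m\to\infty$, using property~(P) at each scale to tighten the bound on $d_P(\wt h^n\mu,\wt h^n\nu)$ until it vanishes.
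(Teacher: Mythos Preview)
Your strategy---assume $\liminf_n d_P(\wt h^{\,n}\mu,\wt h^{\,n}\nu)=0$ and deduce that the limit is $0$---is correct in principle and is the contrapositive of the paper's route. Your mass-flow analysis is also correct: since $h^{-r}(u_j)\subset u_j$ and $h^{-r}(w_j)\supset w_j$, the sequences $\wt h^{\,n}\mu(u_j)$ and $\wt h^{\,n}\mu(w_j)$ converge monotonically along each residue class modulo $r$, and the bridge masses vanish.

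The gap you flag, however, is not where you think it is, and the proposed diagonal argument across scales is unnecessary. You do not need invariant clopen sets at all. Once you know the vertex masses converge along residues, the hypothesis $d_P(\wt h^{\,n_k}\mu,\wt h^{\,n_k}\nu)\to 0$ combined with Lemma~\ref{Lemma1} forces the limits to agree along the residue class $j_0$. The point you are missing is that $\lim_m \wt h^{\,mr+c}\mu(u_j)=\mu\big(X\cap u_{\,j-c\bmod r}\big)$ (with $X=\bigcap_n h^{-n}(\bigcup_k u_k)$ the left core) depends only on $j-c$, so agreement for \emph{one} residue class gives agreement for \emph{all}; the analogous statement holds for the $w_j$ via a phase function on $D\setminus X$. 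Lemma~\ref{Lemma2} then yields $d_P(\wt h^{\,n}\mu,\wt h^{\,n}\nu)<\eps$ for all large $n$, already at the single scale $\p$. The clopen loops $a_L,a_R$ supplied by property~(P) are a red herring here: they do not exhaust the cores and are not needed.

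The paper argues from the opposite end. Assuming $\limsup>0$, it uses pigeonhole over the finitely many unions of elements of $\p$ to find a single clopen $A$ with $\mu(h^{-n}A)>\nu(h^{-n}A)+\eps$ for infinitely many $n$, then shows that after removing a small-measure transient set $Z$ the sequence $(h^{-n}(A)\setminus Z)_{n\geq n_0}$ is eventually periodic with period dividing $q$. This propagates the $\eps$-separation to a full arithmetic progression and gives $\liminf>0$ directly, bypassing any vertex-by-vertex limit analysis.
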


\begin{proof}
Let $h \in \homcantor$ satisfy property (P) of Theorem~B. Let $(\mu,\nu)$ be
a pair of elements of $\mcantor$ and suppose that
$$
\limsup_{n \to \infty} d_P(\wt{h}^{\,n}(\mu),\wt{h}^{\,n}(\nu)) > 0.
$$
Then, we may fix $\eps > 0$ such that
\begin{equation}
d_P(\wt{h}^{\,n}(\mu),\wt{h}^{\,n}(\nu)) > \eps \label{eq1}
\end{equation}
for infinitely many values of $n$. For each such $n$, there is a Borel
subset $Y_n$ of $\cantor$ such that
\begin{equation}
\mu(h^{-n}(Y_n)) > \nu(h^{-n}((Y_n)^\eps)) + \eps. \label{eq2}
\end{equation}

\smallskip
Let $\p$ be a partition of $\cantor$ of mesh $< \eps$ such that every
component of $\gr(h,\p)$ is a balanced dumbbell with plate weight $q \geq 2$.
Let
$$
D_i := \{u_{i,1},\ldots,u_{i,q}\} \cup \{v_{i,1},\ldots,v_{i,s_i}\} \cup
       \{w_{i,1},\ldots,w_{i,q}\} \ \ \ (1 \leq i \leq N)
$$
be the components (dumbbells) of $\gr(h,\p)$.
For each $i \in \{1,\ldots,N\}$, we consider the nonempty closed set
$$
X_i := F_i \cup h(F_i) \cup \ldots \cup h^{q-1}(F_i),
$$
where $F_i := \bigcap_{n=0}^\infty h^{-nq}(u_{i,1})$. Note that
$h(X_i) = X_i$, because $h^q(F_i) = F_i$. Moreover,
$(u_{i,1} \cup \ldots \cup u_{i,q}) \backslash X_i$ is exactly the set
of all $\sigma \in u_{i,1} \cup \ldots \cup u_{i,q}$ whose forward
trajectory eventually goes to the bar of the dumbbell $D_i$, that is,
$h^r(\sigma) \in v_{i,1}$ for some $r \in \N$.

\smallskip
For each $n$ such that (\ref{eq1}) holds, we define
$A_n:= \bigcup\{a : a \in I_\p(Y_n)\} \supset Y_n$.
Since $\mesh(\p) < \eps$, $A_n \subset (Y_n)^\eps$. Thus, by (\ref{eq2}),
$$
\mu(h^{-n}(A_n)) > \nu(h^{-n}(A_n)) + \eps.
$$
Since this last inequality holds for infinitely many values of $n$
and there are only finitely many possible $A_n$'s, we see that there is
a set $A$ which is a union of some elements of $\p$ such that
\begin{equation}
\mu(h^{-n}(A)) > \nu(h^{-n}(A)) + \eps \label{eq3}
\end{equation}
for infinitely many values of $n$.

\smallskip
Since
$$
\lim_{k \to \infty} \varphi\Big( \bigcup_{i=1}^N \bigcup_{n=k}^\infty
  h^{-n}(v_{i,1})\Big) = 0
$$
for every $\varphi \in \mcantor$, we may fix $k \in \N$ such that
\begin{equation}
\mu(Z) < \eps/3 \ \ \ \ \text{ and } \ \ \ \ \nu(Z) < \eps/3, \label{eq4}
\end{equation}
where
$$
Z:= \bigcup_{i=1}^N \bigcup_{n=k}^\infty h^{-n}(v_{i,1}).
$$

Now, we decompose the set $A$ into three disjoint sets:
$$
A = B \cup C \cup D,
$$
where
\begin{align*}
B &\subset X:= \bigcup_{i=1}^N X_i,\\
C &\subset U:= \bigcup_{i=1}^N \big[((u_{i,1} \cup \ldots \cup u_{i,q})
               \backslash X_i) \cup
               (v_{i,1} \cup \ldots \cup v_{i,s_i})\big],\\
D &\subset W:= \bigcup_{i=1}^N (w_{i,1} \cup \ldots \cup w_{i,q}).
\end{align*}
Since $h^{-q}(B) = B$, $h^{-q}(D) \cap W = D$ and
$h^{-n}(C) \subset h^{-n}(U) \subset Z$ whenever $n$ is big enough,
it follows that there exists $n_0 \in \N$ such that the sequence
$$
(h^{-n}(A) \backslash Z)_{n \geq n_0}
$$
is periodic and its period divides $q$.
By (\ref{eq3}), there exists $m_0 \geq n_0$ such that
$$
\mu(h^{-m_0}(A)) > \nu(h^{-m_0}(A)) + \eps.
$$
Hence, by (\ref{eq4}),
$$
\mu(h^{-m_0}(A) \backslash Z) > \nu(h^{-m_0}(A) \backslash Z)
  + \frac{2\eps}{3}\cdot
$$
Since the period of the periodic sequence
$(h^{-n}(A) \backslash Z)_{n \geq n_0}$ divides $q$, we obtain
\begin{equation}
\mu(h^{-m_0-nq}(A) \backslash Z) > \nu(h^{-m_0-nq}(A) \backslash Z)
  + \frac{2\eps}{3} \ \ \text{ for all } n \in \N_0. \label{eq5}
\end{equation}
Choose $0 < \delta < \min\{\delta(\p),\eps/3\}$.
Then, for every $n \in \N_0$,
\begin{align*}
\mu(h^{-m_0-nq}(A))
  &\geq \mu(h^{-m_0-nq}(A) \backslash Z)\\
  &>    \nu(h^{-m_0-nq}(A) \backslash Z) + \frac{2\eps}{3}\\
  &>    \nu(h^{-m_0-nq}(A)) + \frac{\eps}{3}\\
  &>    \nu(h^{-m_0-nq}(A)) + \delta\\
  &=    \nu(h^{-m_0-nq}(A^\delta)) + \delta,
\end{align*}
where we used (\ref{eq5}), (\ref{eq4}) and the fact that $A^\delta = A$
(because $\delta < \delta(\p)$). Thus,
$$
d_P(\wt{h}^{\,m_0+nq}(\mu),\wt{h}^{\,m_0+nq}(\nu)) \geq \delta \ \
  \text{ for all } n \in \N_0.
$$
This implies that
$$
\liminf_{n \to \infty} d_P(\wt{h}^{\,n}(\mu),\wt{h}^{\,n}(\nu)) > 0,
$$
and so $(\mu,\nu)$ is not a Li-Yorke pair for $\wt{h}$.
\end{proof}

Let us now recall the notion of topological entropy. Fix $f \in \cC(M)$.
For each $n \in \N$, consider the equivalent metric $d_n$ on $M$ given by
$$
d_n(x,y):= \max_{0 \leq k < n} d(f^k(x),f^k(y)).
$$
A subset $A$ of $M$ is {\it $(n,\epsilon,f)$-separated} if
$d_n(x,y) \geq \epsilon$ for every $x, y \in A$ with $x \neq y$.
Let $N(n,\epsilon,f)$ be the maximum cardinality of an
$(n,\epsilon,f)$-separated set. The {\em topological entropy}
of $f$ is defined by
$$
\ent(f):= \lim_{\epsilon \to 0^+} \Big( \limsup_{n \to \infty}
          \frac{1}{n} \log N(n,\epsilon,f) \Big).
$$
This notion was introduced by Adler, Konheim and McAndrew \cite{AdlKonMcA65}.
Here we are adopting the equivalent definition formulated by
Bowen \cite{RBow71} and Dinaburg \cite{EDin71}.

\smallskip
Glasner and Weiss \cite{EGlaBWei95} discovered the surprising fact that
$\ent(h) = 0$ implies $\ent(\wt{h}) = 0$ ($h \in \cH(M)$). Moreover,
they proved in \cite{EGlaBWei01} that the generic $h \in \homcantor$
has topological entropy zero. Hence, by combining these two facts,
we have the following result:
\begin{quote}
{\it For the generic $h \in \homcantor$, $\ent(\wt{h}) = 0$.}
\end{quote}
We remark that Theorem~\ref{LYHom} also implies this result of Glasner
and Weiss, since homeomorphisms with positive topological entropy are
Li-Yorke chaotic \cite{BlaGlaKolMaa02}. In strong contrast, it was proved
in \cite{NBerRVer14} that for the generic $h \in \homcantor$,
$\ent(\ov{h}) = \infty$.

\smallskip
Let us now consider the case of continuous maps. For this purpose,
we will need the following lemmas.

\begin{lemma}\label{Lemma1}
Let $\p$ be a partition of $\cantor$. For every $\mu, \nu \in \mcantor$, if
$$
d_P(\mu,\nu) < \delta \leq \delta(\p),
$$
then
$$
|\mu(a) - \nu(a)| < \delta \ \text{ for all } a \in \p.
$$
\end{lemma}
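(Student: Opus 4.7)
The plan is to leverage the fact that the elements of the partition $\p$ are mutually separated by at least $\delta(\p)$, so that inflating any $a \in \p$ by any radius $\delta' \leq \delta(\p)$ leaves $a$ unchanged. Once this is established, the Prohorov inequality applied directly to $X = a$ collapses to the desired two-sided bound $|\mu(a) - \nu(a)| < \delta$.

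More precisely, I would first observe that for any $a \in \p$ and any $0 < \delta' \leq \delta(\p)$, one has $a^{\delta'} = a$. Indeed, $a \subset a^{\delta'}$ is trivial, and conversely any $x \in a^{\delta'}$ lies in some element $b$ of $\p$; if $b \neq a$, then $d(x,a) \geq d(b,a) \geq \delta(\p) \geq \delta'$, contradicting $x \in a^{\delta'}$. Hence $b = a$.

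Next, since $d_P(\mu,\nu) < \delta$ and $d_P$ is defined as an infimum, I can pick some $\delta'$ with $d_P(\mu,\nu) < \delta' < \delta$ (so in particular $\delta' < \delta(\p)$) for which the Prohorov inequalities
\[
\mu(X) \leq \nu(X^{\delta'}) + \delta' \quad\text{and}\quad \nu(X) \leq \mu(X^{\delta'}) + \delta'
\]
hold for every Borel set $X \subset \cantor$. Specializing to $X = a$ and using the first step, $a^{\delta'} = a$, so
\[
\mu(a) \leq \nu(a) + \delta' < \nu(a) + \delta \quad\text{and}\quad \nu(a) \leq \mu(a) + \delta' < \mu(a) + \delta,
\]
which together give $|\mu(a) - \nu(a)| < \delta$, as required.

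There is no real obstacle here beyond carefully invoking the infimum definition of $d_P$ (to extract a $\delta' < \delta$ witnessing the Prohorov inequality) and noticing that $\delta \leq \delta(\p)$ is exactly what is needed to make each partition block $a$ its own $\delta'$-neighborhood. The lemma is essentially a computation, but the identity $a^{\delta'} = a$ is the crucial geometric input.
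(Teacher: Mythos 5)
Your proof is correct and follows essentially the same route as the paper's: pick an intermediate level $\gamma$ (your $\delta'$) with $d_P(\mu,\nu) < \gamma < \delta \leq \delta(\p)$, observe that $a^{\gamma} = a$ for each block $a \in \p$, and apply the two Prohorov inequalities at level $\gamma$ with $X = a$. The only difference is that you spell out the verification of $a^{\delta'} = a$, which the paper leaves implicit.
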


\begin{proof}
Let $\gamma$ be such that $d_P(\mu,\nu) < \gamma < \delta$.
Since $\gamma < \delta(\p)$, $a^\gamma = a$ for every $a \in \p$.
Therefore, since $d_P(\mu,\nu) < \gamma$,
$$
\mu(a) \leq \nu(a^\gamma) + \gamma = \nu(a) + \gamma \ \ \ \text{ and } \ \ \
\nu(a) \leq \mu(a^\gamma) + \gamma = \mu(a) + \gamma,
$$
and so $|\mu(a) - \nu(a)| \leq \gamma < \delta$ ($a \in \p$).
\end{proof}

\begin{lemma}\label{Lemma2}
Let $\p$ be a partition of $\cantor$. For every $\mu, \nu \in \mcantor$, if
$$
|\mu(a) - \nu(a)| \leq \frac{\mesh(\p)}{\card(\p)} \ \text{ for all } a \in \p,
$$
then
$$
d_P(\mu,\nu) \leq \mesh(\p).
$$
\end{lemma}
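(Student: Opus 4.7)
The plan is to use the one-sided characterization of the Prohorov metric that was recorded in the introduction, namely
$$
d_P(\mu,\nu) = \inf\{\delta > 0 : \mu(X) \leq \nu(X^\delta) + \delta \text{ for all } X \in \cB_{\cantor}\}.
$$
So it suffices to show that for every $\delta > \mesh(\p)$ and every Borel set $X \subset \cantor$ one has $\mu(X) \leq \nu(X^\delta) + \delta$, and then let $\delta \downarrow \mesh(\p)$.

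First I would replace $X$ by its ``$\p$-hull''. Set $A := \bigcup I_\p(X) = \bigcup\{a \in \p : a \cap X \neq \emptyset\}$. Clearly $X \subset A$, so $\mu(X) \leq \mu(A)$. Moreover, every $a \in I_\p(X)$ has diameter at most $\mesh(\p) < \delta$ and meets $X$, so $a \subset X^\delta$; hence $A \subset X^\delta$ and thus $\nu(A) \leq \nu(X^\delta)$.

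Next I would estimate $\mu(A)$ against $\nu(A)$ using the pointwise bound on the partition. Since $A$ is a finite disjoint union of elements of $\p$, the hypothesis yields
$$
\mu(A) = \sum_{a \in I_\p(X)} \mu(a)
      \leq \sum_{a \in I_\p(X)} \Bigl(\nu(a) + \frac{\mesh(\p)}{\card(\p)}\Bigr)
      \leq \nu(A) + \card(I_\p(X))\cdot \frac{\mesh(\p)}{\card(\p)}
      \leq \nu(A) + \mesh(\p).
$$
Chaining everything together gives $\mu(X) \leq \mu(A) \leq \nu(A) + \mesh(\p) \leq \nu(X^\delta) + \mesh(\p) < \nu(X^\delta) + \delta$, which is exactly what we needed.

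There is essentially no obstacle here: the argument is a direct application of the definition, and the only thing worth being careful about is the factor $1/\card(\p)$, which is precisely what makes the crude bound $|\mu(a) - \nu(a)|$ summed over $a \in I_\p(X) \subset \p$ come out to at most $\mesh(\p)$, matching the target diameter. The choice of $A$ as the union of cells meeting $X$ is the standard discretization trick for Prohorov estimates on partitioned spaces.
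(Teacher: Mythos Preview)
Your proof is correct and is essentially identical to the paper's own argument: both fix $\delta > \mesh(\p)$ (the paper calls it $\gamma$), replace $X$ by its $\p$-hull $\bigcup I_\p(X) \subset X^\delta$, and then sum the hypothesis over $I_\p(X)$ to get $\mu(X) \leq \nu(X^\delta) + \delta$.
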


\begin{proof}
Fix $\gamma > \mesh(\p)$. For each Borel subset $X$ of $\cantor$,
$\bigcup\{a : a \in I_\p(X)\} \subset X^\gamma$,
and so
$$
\mu(X) = \sum_{a \in I_\p(X)} \mu(X \cap a)
     \leq \sum_{a \in I_\p(X)} \mu(a)
     \leq \Big( \sum_{a \in I_\p(X)} \nu(a) \Big) + \mesh(\p)
       < \nu(X^\gamma) + \gamma.
$$
Thus, $d_P(\mu,\nu) \leq \gamma$. Since $\gamma > \mesh(\p)$ is arbitrary,
we have the desired inequality.
\end{proof}

Given $f : M \to M$, recall that $f$ is {\em equicontinuous} at a point
$x \in M$ if for every $\eps > 0$ there exists $\delta > 0$ such that
$$
d(y,x) < \delta \ \Longrightarrow \ d(f^n(y),f^n(x)) < \eps
  \text{ for all } n \geq 0.
$$

\begin{theorem}\label{Equicontinuous}
For the generic $f \in \contcantor$, $\wt{f}$ is equicontinuous
at every point.
\end{theorem}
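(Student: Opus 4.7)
The plan is to use property (Q) of Theorem~A, which is a generic property in $\contcantor$. I fix $f \in \contcantor$ satisfying (Q) and show that $\wt{f}$ is equicontinuous at every $\mu \in \mcantor$. Given $\mu$ and $\eps > 0$, I choose $m \in \N$ with $1/m < \eps$ and, by (Q), a partition $\p$ of $\cantor$ with $\mesh(\p) < 1/m$ such that every component of $\gr(f,\p)$ is a balloon of type $(q!,q!)$ strict relative to $f$, for some common $q$.

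The key step, which I would prove first, is the structural claim that for every $a \in \p$ and every $n \geq 0$ the set $f^{-n}(a)$ is a (possibly empty) union of elements of $\p$. For $n = 1$ this uses strictness crucially: the relations $f(v_i) \subseteq v_{i+1}$, $f(w_j) \subseteq w_{j+1}$ and $f(v_s) \cup f(w_t) \subseteq w_1$ inside each balloon, combined with the fact that distinct components share no vertices, force $f(b)$ to be contained in a single element of $\p$ for every $b \in \p$. Hence $f^{-1}(a) = \bigcup\{b \in \p : f(b) \subseteq a\}$ is exactly the union of those vertices with an outgoing edge to $a$ in $\gr(f,\p)$, which by the balloon shape is $\emptyset$, a single vertex, or the pair $\{v_s,w_t\}$ (when $a = w_1$). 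The general case follows by induction on $n$.

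Given the structural claim, I would set $\delta := \min\{\delta(\p),\, \mesh(\p)/\card(\p)^2\}$. If $d_P(\mu,\nu) < \delta$, Lemma~\ref{Lemma1} yields $|\mu(b)-\nu(b)| < \mesh(\p)/\card(\p)^2$ for every $b \in \p$. Writing $f^{-n}(a) = \bigcup_{b \in J_{a,n}} b$ with $J_{a,n} \subseteq \p$, one then computes
$$
|\wt{f}^n(\mu)(a) - \wt{f}^n(\nu)(a)|
 = |\mu(f^{-n}(a)) - \nu(f^{-n}(a))|
 \leq \sum_{b \in J_{a,n}} |\mu(b) - \nu(b)|
 < \frac{\mesh(\p)}{\card(\p)}
$$
for every $a \in \p$ and every $n \geq 0$. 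Applying Lemma~\ref{Lemma2} then gives $d_P(\wt{f}^n(\mu),\wt{f}^n(\nu)) \leq \mesh(\p) < \eps$ uniformly in $n$, which is exactly equicontinuity of $\wt{f}$ at $\mu$. Note that $\delta$ does not depend on $\mu$, so in fact $\wt{f}$ is uniformly equicontinuous on $\mcantor$.

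The main obstacle is the structural claim: in an arbitrary digraph $\gr(f,\p)$, the edge $\edge{ba}$ only records $f(b) \cap a \neq \emptyset$, so $f^{-1}(a)$ need not be a union of partition elements, and iterated preimages would rapidly fragment the partition. The strictness in (Q) is precisely the device that upgrades each edge to an inclusion $f(b) \subseteq a$ for a unique $a$, turning $\p$ into a tiling that $f$ shuffles as whole blocks; this block structure is what lets a single $\delta$ control all iterates simultaneously via Lemmas~\ref{Lemma1} and~\ref{Lemma2}.
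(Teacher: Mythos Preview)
Your proof is correct and follows essentially the same route as the paper: use property~(Q), observe that for a balloon partition each $f^{-n}(a)$ is a union of elements of $\p$, then combine Lemmas~\ref{Lemma1} and~\ref{Lemma2}. The only cosmetic difference is that the paper exploits the sharper fact that $f^{-n}(a)$ is a union of at most \emph{two} vertices (hence takes $\delta = \min\{\delta(\p),\mesh(\p)/(2\card(\p))\}$), whereas you allow up to $\card(\p)$ many and compensate with $\delta = \min\{\delta(\p),\mesh(\p)/\card(\p)^2\}$; also note that the out-degree-one structure of a balloon already forces $f(b)$ into a single element of $\p$, so strictness is not actually ``crucial'' for your structural claim.
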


\begin{proof}
Let $f \in \contcantor$ satisfy property (Q) of Theorem A.
Given $\eps > 0$, there exist a partition $\p$ of $\cantor$ of mesh $< \eps$
and an integer $q \geq 1$ such that every component of $\gr(f,\p)$ is a
balloon of type $(q,q)$. Let $\mu,\nu \in \mcantor$ be such that
$$
d_P(\mu,\nu) < \min\Big\{ \delta(\p),\frac{\mesh(\p)}{2 \card(\p)} \Big\}.
$$
By Lemma~\ref{Lemma1},
$$
|\mu(a) - \nu(a)| < \frac{\mesh(\p)}{2 \card(\p)} \ \ \ \ (a \in \p).
$$
Fix $a \in \p$ and $n \geq 0$. Let $B$ be the component (balloon) of
$\gr(f,\p)$ that contains $a$ as a vertex. Then,
$$
|(\wt{f}^{\,n}(\mu))(a) - (\wt{f}^{\,n}(\nu))(a)|
  = |\mu(f^{-n}(a)) - \nu(f^{-n}(a))|
  < \frac{\mesh(\p)}{\card(\p)},
$$
because $f^{-n}(a)$ is empty or a vertex of $B$ or the union of two vertices
of $B$. Therefore, it follows from Lemma~\ref{Lemma2} that
$$
d_P(\wt{f}^{\,n}(\mu),\wt{f}^{\,n}(\nu)) < \eps \ \text{ for all } n \geq 0.
$$
This completes the proof.
\end{proof}

The above theorem is not true in the case of homeomorphisms. Indeed,
it was proved in \cite{NBerUDar12} that the generic $h \in \homcantor$
is not equicontinuous at each point of an uncountable set, and so the
same is true for the induced map $\wt{h}$.

\smallskip
The above theorem has the following interesting consequences.

\begin{corollary}
For the generic $f \in \contcantor$, $\wt{f}$ has no Li-Yorke pair.
\end{corollary}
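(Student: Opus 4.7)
The plan is to derive the corollary directly from Theorem~\ref{Equicontinuous} via a standard compactness argument. First I would invoke the classical fact that a family of continuous self-maps of a compact metric space which is equicontinuous at every point is automatically \emph{uniformly} equicontinuous. Applying this to the family $\{\wt{f}^{\,n}\}_{n \geq 0}$ on the compact space $\mcantor$, Theorem~\ref{Equicontinuous} yields: for every $\eps > 0$ there exists $\delta > 0$ such that
$$
d_P(\alpha,\beta) < \delta \ \Longrightarrow \
d_P(\wt{f}^{\,n}(\alpha), \wt{f}^{\,n}(\beta)) < \eps
\ \text{ for all } n \geq 0.
$$
(The finite subcover argument: cover $\mcantor$ by finitely many balls $B(\xi_i, \delta_i/2)$ coming from equicontinuity at $\xi_i$ with tolerance $\eps/2$, and take $\delta$ to be the minimum of the $\delta_i/2$.)

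Next I would deduce the corollary. Suppose, toward a contradiction, that $(\mu,\nu)$ is a Li-Yorke pair for $\wt{f}$. Then
$$
\liminf_{n \to \infty} d_P(\wt{f}^{\,n}(\mu), \wt{f}^{\,n}(\nu)) = 0.
$$
Fix an arbitrary $\eps > 0$ and choose $\delta$ as above. By the liminf condition, there is an $n_0$ with $d_P(\wt{f}^{\,n_0}(\mu), \wt{f}^{\,n_0}(\nu)) < \delta$. Applying the uniform equicontinuity to the pair $(\wt{f}^{\,n_0}(\mu), \wt{f}^{\,n_0}(\nu))$, we obtain
$$
d_P(\wt{f}^{\,n_0+m}(\mu), \wt{f}^{\,n_0+m}(\nu)) < \eps
\ \text{ for all } m \geq 0,
$$
so $\limsup_{n \to \infty} d_P(\wt{f}^{\,n}(\mu), \wt{f}^{\,n}(\nu)) \leq \eps$. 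Since $\eps > 0$ was arbitrary, this limsup is $0$, contradicting the second half of the Li-Yorke definition.

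There is no real obstacle here; the only nontrivial ingredient is the Arzel\`a--Ascoli-type passage from pointwise to uniform equicontinuity on a compact space, which is entirely standard. Everything else is a one-line application of the definitions.
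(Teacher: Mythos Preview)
Your proof is correct and follows exactly the route the paper intends: the corollary is stated without proof immediately after Theorem~\ref{Equicontinuous} as one of its ``interesting consequences,'' and the standard implication ``equicontinuous $\Rightarrow$ no Li--Yorke pair'' (via the liminf argument you wrote) is the intended justification. One minor remark: the proof of Theorem~\ref{Equicontinuous} already produces a $\delta$ that depends only on $\eps$ and not on the basepoint, so uniform equicontinuity is established directly there and the compactness detour, while perfectly valid, is not strictly needed.
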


\begin{corollary}
For the generic $f \in \contcantor$, $\ent(\wt{f}) = 0$.
\end{corollary}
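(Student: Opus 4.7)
The plan is to derive the corollary directly from Theorem~\ref{Equicontinuous}. That theorem gives a dense $G_\delta$ set of $f \in \contcantor$ for which $\wt{f}$ is equicontinuous at every point of $\mcantor$. Since $\mcantor$ is compact, pointwise equicontinuity at every point upgrades to uniform equicontinuity of the whole family $\{\wt{f}^{\,n}\}_{n \geq 0}$, and a uniformly equicontinuous system on a compact metric space has zero topological entropy. So the whole corollary reduces to two standard compactness arguments.

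More concretely, I would fix such an $f$ and, given $\eps > 0$, use pointwise equicontinuity of $\wt{f}$ at each $\mu \in \mcantor$ to pick $\delta_\mu > 0$ with $d_P(\nu,\mu) < \delta_\mu \Rightarrow d_P(\wt{f}^{\,n}(\nu),\wt{f}^{\,n}(\mu)) < \eps/2$ for every $n \geq 0$. Covering the compact space $\mcantor$ by finitely many balls $B(\mu_i,\delta_{\mu_i}/2)$ and taking $\delta := \tfrac{1}{2}\min_i \delta_{\mu_i}$, the standard triangle-inequality trick (locate any $\nu$ inside some $B(\mu_i,\delta_{\mu_i}/2)$ and note that a second point within $\delta$ of $\nu$ still lies in $B(\mu_i,\delta_{\mu_i})$) yields
$$
d_P(\nu,\nu') < \delta \ \Longrightarrow \ d_P(\wt{f}^{\,n}(\nu),\wt{f}^{\,n}(\nu')) < \eps \ \ \text{ for all } n \geq 0,
$$
which is uniform equicontinuity of the forward-orbit family.

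I then translate this into an entropy bound. With $\delta$ as above, any $(n,\eps,\wt{f})$-separated subset $S$ of $\mcantor$ must actually be $\delta$-separated in the original Prohorov metric: two points within $\delta$ of each other cannot be $\eps$-separated under any iterate, hence cannot be $d_n$-separated. Letting $K(\delta)$ denote the maximum cardinality of a $\delta$-separated subset of the compact space $\mcantor$, we get $N(n,\eps,\wt{f}) \leq K(\delta)$ for every $n$, so $\tfrac{1}{n}\log N(n,\eps,\wt{f}) \to 0$ as $n \to \infty$. Taking $\eps \to 0^+$ gives $\ent(\wt{f}) = 0$. The only nontrivial step is the passage from pointwise to uniform equicontinuity, and that is handled by the routine compactness argument sketched above; no real obstacle is expected.
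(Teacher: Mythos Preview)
Your argument is correct and is exactly the intended one: the paper states this corollary immediately after Theorem~\ref{Equicontinuous} with no proof, implicitly relying on the standard fact that an equicontinuous self-map of a compact metric space has zero topological entropy. You have simply written out that standard compactness argument in full, and every step is sound.
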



\section{Chain continuity and shadowing}

\hspace*{4mm} Given $f : M \to M$, recall that $f$ is {\em chain continuous}
at a point $x \in M$ \cite{EAki96,NBer02} if for every $\eps > 0$
there exists $\delta > 0$ such that for any choice of points
$$
x_0 \in B(x;\delta), \
x_1 \in B(f(x_0);\delta), \
x_2 \in B(f(x_1);\delta),\ldots,
$$
we have that
$$
d(x_n,f^n(x)) < \eps \ \ \text{ for all } n \geq 0.
$$
Of course, chain continuity is a much stronger property than equicontinuity.

\smallskip
It was proved in \cite{NBerRVer14} that for the generic $f \in \contcantor$
(resp.\ $h \in \homcantor$), the induced map $\ov{f}$ (resp.\ $\ov{h}$) is
chain continuous at every point (resp.\ is chain continuous at every point
of a dense open set). We shall see that the situation is completely different
for the induced map $\wt{f}$ (resp.\ $\wt{h}$). Indeed, for the generic
$f \in \contcantor$ (resp.\ $h \in \homcantor$), the induced map $\wt{f}$
(resp.\ $\wt{h}$) has no point of chain continuity. We shall obtain these
facts from more general results for arbitrary compact metric spaces.

\begin{lemma}\label{Lemma4}
If $\mu,\nu \in \cM(M)$, $\delta > 0$, $n \in \N_0$ and $1 - (n+1)\delta > 0$,
then
$$
d_P\big((1-(n+1)\delta)\mu + (n+1)\delta\nu,(1-n\delta)\mu + n\delta\nu\big)
  \leq \delta.
$$
\end{lemma}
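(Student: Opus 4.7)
The plan is to set $\mu_k := (1-k\delta)\mu + k\delta\nu$ for $k = n, n+1$, so the lemma reduces to showing $d_P(\mu_{n+1},\mu_n) \leq \delta$. First I would verify that both $\mu_n$ and $\mu_{n+1}$ are genuinely elements of $\cM(M)$: since $\delta > 0$ and $1 - (n+1)\delta > 0$, the coefficients $n\delta$ and $(n+1)\delta$ lie in $[0,1)$, so each $\mu_k$ is a convex combination of probability measures and hence a probability measure.

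Next I would compute, for an arbitrary Borel set $X \subset M$,
$$
\mu_{n+1}(X) - \mu_n(X) \;=\; \delta\bigl(\nu(X) - \mu(X)\bigr).
$$
Since $\mu(X), \nu(X) \in [0,1]$, this gives the uniform bound $|\mu_{n+1}(X) - \mu_n(X)| \leq \delta$ for every Borel $X$.

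Combining this with the trivial monotonicity $\mu_k(X) \leq \mu_k(X^\delta)$ (coming from $X \subset X^\delta$), I obtain both Prohorov-type inequalities simultaneously:
$$
\mu_{n+1}(X) \leq \mu_n(X) + \delta \leq \mu_n(X^\delta) + \delta,
\qquad
\mu_n(X) \leq \mu_{n+1}(X) + \delta \leq \mu_{n+1}(X^\delta) + \delta.
$$
Since these hold for all $X \in \cB_M$, the value $\delta$ belongs to the set whose infimum defines $d_P(\mu_{n+1},\mu_n)$, giving $d_P(\mu_{n+1},\mu_n) \leq \delta$.

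There is no real obstacle here: the lemma is essentially a direct consequence of the definition of the Prohorov metric once one recognises that the convex interpolation changes the mass on any Borel set by at most $\delta$, so one does not need to exploit the $\delta$-enlargement in any nontrivial way beyond the inclusion $X \subset X^\delta$.
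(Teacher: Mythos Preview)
Your proof is correct and follows essentially the same approach as the paper: compute $\mu_{n+1}(X)-\mu_n(X)=\delta(\nu(X)-\mu(X))$, bound this by $\delta$, and invoke $X\subset X^\delta$. The only cosmetic difference is that the paper uses the one-sided characterization of $d_P$ stated in the introduction, so it checks just one of the two inequalities you wrote out.
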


\begin{proof}
For each Borel subset $X$ of $M$,
\begin{align*}
((1 - (n+1)\delta)\mu + (n+1)\delta\nu)(X)
  &= ((1 - n\delta)\mu + n\delta\nu)(X) + \delta(\nu(X) - \mu(X))\\
  &\leq ((1 - n\delta)\mu + n\delta\nu)(X^\delta) + \delta.
\end{align*}
This implies the desired inequality.
\end{proof}

\begin{theorem}\label{ChainC}
Given $0 < \delta < 1$, there exists $k_0 \in \N$ such that
for any $f \in \cC(M)$, any $\mu,\nu \in \cM(M)$ and any $k \geq k_0$,
there exist
$$
\mu_1 \in B(\wt{f}(\mu);\delta), \
\mu_2 \in B(\wt{f}(\mu_1);\delta),\ldots,
\mu_k \in B(\wt{f}(\mu_{k-1});\delta)
$$
such that
$$
\mu_k = \wt{f}^{\, k}(\nu).
$$
In particular, if we choose $\nu = \pi_z$ for some $z \in M$, then
$$
\mu_k = \pi_{f^k(z)}.
$$
\end{theorem}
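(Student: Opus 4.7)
The plan is to interpolate linearly, at each time step, between the two orbits $\bigl(\wt{f}^{\,j}(\mu)\bigr)_{j\geq 0}$ and $\bigl(\wt{f}^{\,j}(\nu)\bigr)_{j\geq 0}$, shifting the weight by an amount of $1/k$ at each step so that the initial measure is (close to) $\mu$ and the terminal one is exactly $\wt{f}^{\,k}(\nu)$. Concretely, I would choose $k_0 := \lfloor 1/\delta \rfloor + 1$, and for $k \geq k_0$ set
$$
\mu_j \,:=\, \Bigl(1 - \tfrac{j}{k}\Bigr)\,\wt{f}^{\,j}(\mu) \,+\, \tfrac{j}{k}\,\wt{f}^{\,j}(\nu), \qquad j = 1, 2, \ldots, k.
$$
By construction $\mu_k = \wt{f}^{\,k}(\nu)$, which gives the required terminal identity.

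The key computational step exploits the fact that $\wt{f}$ is affine on $\cM(M)$ (this is immediate from the formula $(\wt{f}(\mu))(X) = \mu(f^{-1}(X))$, which is linear in $\mu$). Applying this to $\mu_{j-1}$ yields
$$
\wt{f}(\mu_{j-1}) \,=\, \Bigl(1 - \tfrac{j-1}{k}\Bigr)\,\wt{f}^{\,j}(\mu) \,+\, \tfrac{j-1}{k}\,\wt{f}^{\,j}(\nu).
$$
Thus $\mu_j$ and $\wt{f}(\mu_{j-1})$ are two consecutive convex combinations of the same pair of probability measures $\wt{f}^{\,j}(\mu)$ and $\wt{f}^{\,j}(\nu)$ with weights differing by $1/k$. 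Invoking Lemma~\ref{Lemma4} with its $\mu, \nu, \delta, n$ replaced by $\wt{f}^{\,j}(\mu), \wt{f}^{\,j}(\nu), 1/k, j-1$ respectively, I obtain
$$
d_P\bigl(\mu_j,\, \wt{f}(\mu_{j-1})\bigr) \,\leq\, \tfrac{1}{k} \,<\, \delta,
$$
which places $\mu_j$ in $B(\wt{f}(\mu_{j-1});\delta)$ as desired. This handles $1 \leq j \leq k-1$, the range where the hypothesis $1 - j/k > 0$ of Lemma~\ref{Lemma4} is satisfied.

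The only mild obstacle is the endpoint $j = k$, for which Lemma~\ref{Lemma4}'s strict hypothesis just fails. I would dispose of this by repeating the lemma's one-line argument directly: for any Borel $X \subset M$,
$$
\mu_k(X) \,=\, \wt{f}(\mu_{k-1})(X) \,+\, \tfrac{1}{k}\bigl(\wt{f}^{\,k}(\nu)(X) - \wt{f}^{\,k}(\mu)(X)\bigr) \,\leq\, \wt{f}(\mu_{k-1})(X^{1/k}) + \tfrac{1}{k},
$$
giving the same bound $d_P(\mu_k, \wt{f}(\mu_{k-1})) \leq 1/k < \delta$. Finally, the last sentence of the theorem is automatic: taking $\nu = \pi_z$ and iterating the identity $\wt{f}(\pi_w) = \pi_{f(w)}$ recalled in the preliminaries yields $\mu_k = \wt{f}^{\,k}(\pi_z) = \pi_{f^k(z)}$.
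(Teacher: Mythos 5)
Your proof is correct and follows the same core idea as the paper's: linearly interpolate along the two orbits $(\wt{f}^{\,j}(\mu))$ and $(\wt{f}^{\,j}(\nu))$, invoking the affinity of $\wt{f}$ and Lemma~\ref{Lemma4} to bound each step of the chain, and verify the terminal step by hand because the lemma's strict hypothesis just fails there (the paper does an analogous direct computation at its terminal step). The only genuine difference is in the parametrization: the paper fixes an increment $\gamma < \delta$ once and for all, reaches $\wt{f}^{\,k_0}(\nu)$ in exactly $k_0$ steps, and then pads the chain with the true orbit $\mu_{k_0+1}=\wt{f}(\mu_{k_0}),\ldots,\mu_k=\wt{f}(\mu_{k-1})$ to reach length $k$; you instead use the $k$-dependent increment $1/k$, so the interpolation
$$
\mu_j = \Bigl(1-\tfrac{j}{k}\Bigr)\wt{f}^{\,j}(\mu) + \tfrac{j}{k}\,\wt{f}^{\,j}(\nu)
$$
lands on $\wt{f}^{\,k}(\nu)$ at step $k$ for every $k\geq k_0$ with no padding. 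Your variant is marginally cleaner in that the terminal identity is built into the formula, while the paper's fixed-$\gamma$ version keeps the chain construction itself independent of $k$; both yield the same uniform $k_0$ (depending only on $\delta$), as required.
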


\begin{proof}
Let $0 < \gamma < \delta$ and let $k_0 \in \N$ be such that
$(k_0 - 1)\gamma < 1 \leq k_0\gamma$. Define
$$
\mu_1:= (1 - \gamma) \wt{f}(\mu) + \gamma \wt{f}(\nu).
$$
By Lemma~\ref{Lemma4}, $d_P(\mu_1,\wt{f}(\mu)) \leq \gamma < \delta$.
Moreover,
$$
\wt{f}(\mu_1) = (1 - \gamma) \wt{f}^{\, 2}(\mu) + \gamma \wt{f}^{\, 2}(\nu).
$$
Define
$$
\mu_2:= (1 - 2 \gamma) \wt{f}^{\, 2}(\mu) + 2 \gamma \wt{f}^{\, 2}(\nu).
$$
By Lemma~\ref{Lemma4}, $d_P(\mu_2,\wt{f}(\mu_1)) \leq \gamma < \delta$.
Moreover,
$$
\wt{f}(\mu_2) = (1 - 2\gamma) \wt{f}^{\, 3}(\mu) + 2\gamma \wt{f}^{\, 3}(\nu).
$$
We continue this process until we define
$$
\mu_{k_0-1}:= (1 - (k_0 - 1)\gamma) \wt{f}^{\, k_0-1}(\mu) +
              (k_0 - 1)\gamma \wt{f}^{\, k_0-1}(\nu).
$$
Then,
$$
\wt{f}(\mu_{k_0-1}) = (1 - (k_0 - 1)\gamma) \wt{f}^{\, k_0}(\mu) +
                      (k_0 - 1)\gamma \wt{f}^{\, k_0}(\nu).
$$
Now, we define
$$
\mu_{k_0}:= \wt{f}^{\, k_0}(\nu).
$$
For each Borel subset $X$ of $M$,
$$
(\wt{f}(\mu_{k_0-1}))(X)
  \leq 1 - (k_0 - 1)\gamma + (k_0 - 1)\gamma(\wt{f}^{\, k_0}(\nu))(X)
  \leq \mu_{k_0}(X^\gamma) + \gamma.
$$
Thus, $d_P(\mu_{k_0},\wt{f}(\mu_{k_0-1})) \leq \gamma < \delta$.
Finally, it is enough to complete the sequence by defining
$\mu_{k_0+1}:= \wt{f}(\mu_{k_0}),\ldots,\mu_k:= \wt{f}(\mu_{k-1})$.
\end{proof}

\begin{theorem}\label{ChainH}
Given $0 < \delta < 1$, there exists $k_0 \in \N$ such that
for any $h \in \cH(M)$, any $\mu,\nu \in \cM(M)$ and any $k \geq k_0$,
there exist
$$
\mu_1 \in B(\wt{h}(\mu);\delta), \
\mu_2 \in B(\wt{h}(\mu_1);\delta),\ldots,
\mu_k \in B(\wt{h}(\mu_{k-1});\delta)
$$
such that
$$
\mu_k = \nu.
$$
\end{theorem}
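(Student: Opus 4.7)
The plan is to reduce Theorem~\ref{ChainH} to Theorem~\ref{ChainC} using the fact that homeomorphism status of $h$ upgrades $\wt{h}$ to a homeomorphism of $\cM(M)$ as well. The previous theorem produces a $\delta$-pseudo-orbit of length $k$ from $\mu$ landing at $\wt{h}^{\,k}(\nu)$; the only thing we need to change is to make the endpoint exactly $\nu$, and this can be accomplished simply by feeding a different endpoint measure into Theorem~\ref{ChainC}.

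More precisely, I would take $k_0 \in \N$ to be exactly the integer supplied by Theorem~\ref{ChainC} for the given $\delta$ (so $k_0$ depends only on $\delta$, not on $h$, $\mu$, $\nu$, or $k$). Given $h \in \cH(M)$, $\mu, \nu \in \cM(M)$ and $k \geq k_0$, I would apply Theorem~\ref{ChainC} to the continuous map $h$, the same initial measure $\mu$, and the auxiliary measure $\nu' := \wt{h}^{\,-k}(\nu)$, which is well-defined because $\wt{h}$ is a homeomorphism of $\cM(M)$ whenever $h$ is a homeomorphism of $M$. Theorem~\ref{ChainC} then furnishes measures
$$
\mu_1 \in B(\wt{h}(\mu);\delta), \ \mu_2 \in B(\wt{h}(\mu_1);\delta), \ldots, \ \mu_k \in B(\wt{h}(\mu_{k-1});\delta)
$$
with $\mu_k = \wt{h}^{\,k}(\nu') = \wt{h}^{\,k}(\wt{h}^{\,-k}(\nu)) = \nu$, which is exactly the conclusion required.

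There is essentially no obstacle here; the substance of the argument is already contained in Theorem~\ref{ChainC}, and the only content added by the homeomorphism hypothesis is the invertibility of $\wt{h}$, which lets us prescribe the endpoint. It is worth noting that the same $k_0$ works uniformly in $h$, $\mu$, and $\nu$, which is a feature inherited directly from Theorem~\ref{ChainC}.
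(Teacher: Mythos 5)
Your proof is correct and coincides exactly with the paper's argument: both take $k_0$ from Theorem~\ref{ChainC} and apply that theorem to the auxiliary measure $\nu' = \wt{h}^{\,-k}(\nu)$, using the invertibility of $\wt{h}$ to prescribe the endpoint. No further comment is needed.
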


\begin{proof}
Let $k_0 \in \N$ be as in Theorem \ref{ChainC}. Since $h \in \cH(M)$,
we can choose $\nu' \in \cM(M)$ such that $\wt{h}^{\, k}(\nu') = \nu$.
Hence, it is enough to consider $\nu'$ in place of $\nu$ in
Theorem~\ref{ChainC}.
\end{proof}

The next result characterizes the chain continuity of the induced map
$\wt{f}$.

\begin{theorem}\label{CCCont}
For every $f \in \cC(M)$, the following assertions are equivalent:
\begin{description}
\item {\rm   (i)} $\wt{f}$ is chain continuous at some point;
\item {\rm  (ii)} $\wt{f}$ is chain continuous at every point;
\item {\rm (iii)} $\bigcap_{n=1}^\infty f^n(M)$ is a singleton.
\end{description}
\end{theorem}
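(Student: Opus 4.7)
The implication (ii)$\Rightarrow$(i) is immediate, so the substance lies in (iii)$\Rightarrow$(ii) and (i)$\Rightarrow$(iii). My plan is to prove (iii)$\Rightarrow$(ii) by exploiting the contraction of $(f^n(M))_n$ onto $\{p\}$, and to prove (i)$\Rightarrow$(iii) by contradiction, using Theorem~\ref{ChainC} to construct pseudo-orbits landing on arbitrary Dirac masses.

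For (iii)$\Rightarrow$(ii), assume $\bigcap_{n \geq 1} f^n(M) = \{p\}$. Since $(f^n(M))_n$ is a decreasing sequence of nonempty compact subsets of $M$ with singleton intersection, a standard compactness argument gives $\diam(f^n(M)) \to 0$; because $\wt{f}^n(\mu)$ is supported in $f^n(M)$ and $p \in f^n(M)$, it follows that $d_P(\wt{f}^n(\mu), \pi_p) \leq \diam(f^n(M))$ uniformly in $\mu$. To establish chain continuity at a fixed $\mu \in \cM(M)$ for tolerance $\eps > 0$, I choose $N$ with $\diam(f^N(M)) < \eps/4$ and, using uniform continuity of each iterate $\wt{f}^k$ ($k = 0, 1, \ldots, N$) on the compact space $\cM(M)$, pick $\delta > 0$ small enough that $\sum_{k=0}^{N} \omega_k(\delta) < \eps/4$, where $\omega_k$ denotes a modulus of continuity for $\wt{f}^k$. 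A straightforward induction on $j$ then shows that for any $\delta$-pseudo-orbit $\mu_0 \in B(\mu; \delta), \mu_{i+1} \in B(\wt{f}(\mu_i); \delta)$ and every $0 \leq j \leq n$, one has $d_P(\mu_n, \wt{f}^j(\mu_{n-j})) \leq \sum_{k=0}^{j-1} \omega_k(\delta)$. For $n \leq N$, taking $j = n$ and combining with $d_P(\wt{f}^n(\mu_0), \wt{f}^n(\mu)) \leq \omega_n(\delta)$ yields $d_P(\mu_n, \wt{f}^n(\mu)) < \eps/2 < \eps$. For $n > N$, taking $j = N$ places $\mu_n$ within $\eps/4$ of $\wt{f}^N(\mu_{n-N})$, which in turn is within $\eps/4$ of $\pi_p$, while $\wt{f}^n(\mu)$ is also within $\eps/4$ of $\pi_p$, again giving $d_P(\mu_n, \wt{f}^n(\mu)) < \eps$.

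For (i)$\Rightarrow$(iii), I argue by contrapositive. Since $(f^n(M))_n$ is a decreasing sequence of nonempty compacta, $\bigcap_n f^n(M)$ is nonempty; assume it contains two distinct points $p$ and $q$, and suppose $\wt{f}$ is chain continuous at some $\mu \in \cM(M)$. Put $\eps := \min\{d(p,q), 1\}/3 > 0$ and let $\delta \in (0, 1)$ witness chain continuity at $\mu$ for this $\eps$. Apply Theorem~\ref{ChainC} with this $\delta$ to obtain $k_0 \in \N$; since $p, q \in f^{k_0}(M)$, choose $z, z' \in M$ with $f^{k_0}(z) = p$ and $f^{k_0}(z') = q$, and take $\nu = \pi_z$ and $\nu = \pi_{z'}$ in Theorem~\ref{ChainC} to produce two $\delta$-pseudo-orbits starting at $\mu$ and reaching $\pi_p$ and $\pi_q$ respectively at step $k_0$. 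Chain continuity at $\mu$ then forces $d_P(\pi_p, \wt{f}^{k_0}(\mu)) < \eps$ and $d_P(\pi_q, \wt{f}^{k_0}(\mu)) < \eps$, hence $d_P(\pi_p, \pi_q) < 2\eps$, contradicting $d_P(\pi_p, \pi_q) = \min\{d(p,q), 1\} = 3\eps$. The main obstacle I anticipate is the bookkeeping in (iii)$\Rightarrow$(ii): calibrating a single $\delta$ that simultaneously controls the cumulative pseudo-orbit error through up to $N$ applications of $\wt{f}$ and then splicing together the two regimes $n \leq N$ and $n > N$.
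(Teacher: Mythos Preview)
Your proof is correct and follows essentially the same strategy as the paper: for (i)$\Rightarrow$(iii) you use Theorem~\ref{ChainC} to steer $\delta$-pseudo-orbits of $\wt{f}$ onto Dirac masses supported in $\bigcap_n f^n(M)$ and then invoke chain continuity, exactly as the paper does (the paper phrases it directly rather than by contradiction, but the computation is identical). For (iii)$\Rightarrow$(ii) the only difference is organizational: the paper first shows $\bigcap_n \wt{f}^{\,n}(\cM(M))=\{\pi_p\}$ and then appeals to a separately stated lemma (Lemma~\ref{CC}) asserting that any continuous self-map of a compact metric space whose eventual image is a singleton is chain continuous everywhere, proving that lemma by a block-by-block iteration; you instead argue directly for $\wt{f}$ using the uniform bound $d_P(\wt{f}^{\,N}(\nu),\pi_p)\le\diam(f^N(M))$ and a single splice at step $N$, which is arguably more streamlined.
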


\begin{proof}
(i) $\Rightarrow$ (iii):
Put $Y:= \bigcap_{n=1}^\infty f^n(M)$. By hypothesis, there exists
$\mu \in \cM(M)$ such that $\wt{f}$ is chain continuous at $\mu$.
Fix $0 < \eps < 1/2$. There exists $0 < \delta < 1$ such that the relations
$$
\mu_0 \in B(\mu;\delta), \
\mu_1 \in B(\wt{f}(\mu_0);\delta), \
\mu_2 \in B(\wt{f}(\mu_1);\delta),\ldots
$$
imply
$$
d_P(\mu_n,\wt{f}^{\, n}(\mu)) < \eps \ \ \text{ for all } n \geq 0.
$$
Let $k_0 \in \N$ be associated to this $\delta$ as in Theorem~\ref{ChainC}.
Given $y \in Y$, we can choose $z \in M$ such that $f^{k_0}(z) = y$.
By Theorem~\ref{ChainC} with $\nu = \pi_z$ and $k = k_0$, there exist
$$
\mu_1 \in B(\wt{f}(\mu);\delta), \
\mu_2 \in B(\wt{f}(\mu_1);\delta),\ldots,
\mu_{k_0} \in B(\wt{f}(\mu_{k_0-1});\delta)
$$
with
$$
\mu_{k_0} = \pi_{f^{k_0}(z)} = \pi_y.
$$
Hence, $d_P(\pi_y,\wt{f}^{\, k_0}(\mu)) < \eps$. Since $y \in Y$ is arbitrary,
we conclude that
$$
d_P(\pi_y,\pi_w) < 2\eps \ \ \text{ whenever } y,w \in Y.
$$
This implies that $\diam(Y) < 2\eps$, which proves (iii).

\smallskip
\noindent (iii) $\Rightarrow$ (ii):
Let $\bigcap_{n=1}^\infty f^n(M) = \{a\}$. We claim that
$$
\bigcap_{n=1}^\infty \wt{f}^{\,n}(\cM(M)) = \{\pi_a\}.
$$
Indeed, since $\wt{f}(\pi_a) = \pi_{f(a)} = \pi_a$, it is clear that
$\pi_a$ belongs to the above intersection. Conversely, let $\nu$ be
an element of the above intersection. Then, for each $n \in \N$,
there exists $\mu_n \in \cM(M)$ such that $\nu = \wt{f}^{\,n}(\mu_n)$.
Hence,
$$
\nu(f^n(M)) = \mu_n(f^{-n}(f^n(M))) = \mu_n(M) = 1 \ \ \text{ for all }
n \in \N,
$$
which implies that $\nu(\{a\}) = 1$, that is, $\nu = \pi_a$.
Now, (ii) follows from Lemma~\ref{CC} below.

\smallskip
\noindent (ii) $\Rightarrow$ (i): Obvious.
\end{proof}

\begin{lemma}\label{CC}
If $f \in \cC(M)$ and $\bigcap_{n=1}^\infty f^n(M)$ is a singleton,
then $f$ is chain continuous at every point.
\end{lemma}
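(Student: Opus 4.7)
The plan is to use the hypothesis that the decreasing sequence of compact sets $f^n(M)$ shrinks to a single point $a$, forcing $f(a)=a$, in order to split the analysis of any pseudo-orbit into a short initial phase (controlled by uniform continuity) and a long tail (trapped near the fixed point $a$). Indeed, writing $\{a\} = \bigcap_{n=1}^\infty f^n(M)$, the identity $f(f^n(M)) = f^{n+1}(M)$ yields $f(a)=a$; and by compactness of the $f^n(M)$, for every $\eta > 0$ there exists $N \in \N$ with $f^n(M) \subset B(a;\eta)$ for all $n \geq N$.

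Now fix $x \in M$ and $\eps > 0$. I would construct $\delta$ from two ingredients. First, a \emph{trap} around $a$: by continuity of $f$ at the fixed point $a$, choose $0 < s < r < \eps/2$ with $d(y,a) < r \Rightarrow d(f(y),a) < s$, and set $\delta_1 := r - s$. Then $B(a;r)$ is forward invariant under $\delta_1$-pseudo-orbits, since $d(z,f(y)) < \delta_1$ and $y \in B(a;r)$ force $d(z,a) < s + \delta_1 = r$. Second, \emph{finite-time control}: choose $N$ with $f^n(M) \subset B(a;r/2)$ for $n \geq N$, and use a standard induction based on the uniform continuity of $f$ to produce $\delta_2 > 0$ such that any $\delta_2$-pseudo-orbit $(x_k)_{k \geq 0}$ with $d(x_0,x) < \delta_2$ satisfies $d(x_k,f^k(x)) < r/2$ for $k = 0,1,\ldots,N$.

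Setting $\delta := \min\{\delta_1,\delta_2\}$, consider any $\delta$-pseudo-orbit $(x_k)$ with $d(x_0,x) < \delta$. For $k \leq N$, the finite-time control gives $d(x_k,f^k(x)) < r/2 < \eps$; in particular $x_N \in B(f^N(x);r/2) \subset B(a;r)$, after which the trap forces $x_k \in B(a;r)$ for every $k \geq N$. Since also $f^k(x) \in f^k(M) \subset B(a;r/2)$ for $k \geq N$, we obtain $d(x_k,f^k(x)) < 2r < \eps$ in that regime as well, proving chain continuity at $x$. The main subtlety is calibrating $r$, $N$, $\delta_1$, $\delta_2$ in the right order so that the pseudo-orbit lands inside the trap at exactly the step where the true orbit is already safely in its half-radius; once this handoff is arranged, the two regimes dovetail.
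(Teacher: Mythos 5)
The gap is in your ``trap'' ingredient. You claim that \emph{by continuity of $f$ at the fixed point $a$} one may ``choose $0 < s < r < \eps/2$ with $d(y,a) < r \Rightarrow d(f(y),a) < s$.'' Continuity at $a$ (together with $f(a)=a$) only gives: for every $s>0$ there exists $r'>0$ with $d(y,a)<r' \Rightarrow d(f(y),a)<s$; it says nothing about being able to take $r'>s$, which is exactly what your forward-invariance argument for $B(a;r)$ under $\delta_1$-pseudo-orbits requires. In other words, you need a radius $r$ for which $f$ strictly contracts the ball $B(a;r)$ into itself, and neither continuity alone nor even the hypothesis $\bigcap_{n\geq 1}f^n(M)=\{a\}$ guarantees such a one-step \emph{ball} trap.

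Here is a concrete counterexample. Let
\[
M \ := \ \{0\} \cup \{4^{-n} : n \geq 1\} \cup \{8\cdot 4^{-n} : n \geq 1\} \ \subset \ \R,
\]
and define $f(0)=0$, $f(4^{-n}) = 8\cdot 4^{-n}$ and $f(8\cdot 4^{-n}) = 4^{-(n+1)}$. Then $M$ is compact, $f \in \cC(M)$, the orbit of each point strictly decreases after two steps, and $\bigcap_{n\geq 1}f^n(M) = \{0\}$. However, for \emph{every} $r>0$, the largest element of the form $4^{-n}$ lying in $\overline{B(0;r)}$ is sent by $f$ to $8\cdot 4^{-n} > 4\cdot 4^{-n} = 4^{-(n-1)} > r$, so
\[
\sup\{\,d(f(y),0) : d(y,0) \leq r\,\} \ > \ r \qquad\text{for all small } r>0,
\]
and no pair $s<r$ with $f(B(0;r)) \subset B(0;s)$ exists. (Of course the lemma is still true for this $f$; it is your trap that does not exist, not chain continuity.)

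The paper circumvents this precisely by \emph{not} using a one-step ball trap. It first upgrades the hypothesis to Lyapunov stability of $a$ (choose $\gamma$ with $d(y,a)<\gamma \Rightarrow d(f^n(y),a)<\eps/3$ for all $n\geq0$; this is legitimate because $\bigcap f^n(M)=\{a\}$ forces $\wt{f}$-type equicontinuity at $a$), then picks $k$ with $f^k(M)\subset B(a;\gamma)$ and a $\delta$ controlling $k$-step $\delta$-chains. A $\delta$-chain is shown to land in $B(a;\gamma)$ after every block of $k$ steps, and the argument is restarted block by block. This $k$-step ``funnel into a Lyapunov ball'' is what your one-step trap was meant to do but cannot in general. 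To salvage your approach you would either have to reproduce the paper's $k$-step bootstrap, or replace $B(a;r)$ by a (non-spherical) attractor block $U$ with $f(\overline{U})\subset U^{\circ}$, whose existence is a standard but nontrivial fact about asymptotically stable sets and would itself need justification.
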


\begin{proof}
Assume $\bigcap_{n=1}^\infty f^n(M) = \{a\}$.
Then $a$ is a fixed point of $f$ that uniformly attracts all orbits.
Hence, it is clear that $f$ is equicontinuous at every point.
Fix $\eps > 0$ and let $\gamma > 0$ be such that the relation
$d(y,a) < \gamma$ implies $d(f^n(y),a) < \eps/3$ for all $n \geq 0$.
Let $k \in \N$ be such that $f^k(M) \subset B(a;\gamma)$. Then,
\begin{equation}
d(f^n(y),a) < \frac{\eps}{3} \ \text{ for all } y \in M \text{ and all }
  n \geq k. \label{E1}
\end{equation}
Moreover, there exists $\delta > 0$ such that the following property holds:
\begin{description}
\item {($\ast$)} For every $y \in M$ and for every choice of points
      $$
      y_0 \in B(y;\delta), y_1 \in B(f(y_0);\delta),\ldots,
      y_k \in B(f(y_{k-1});\delta),
      $$
      we have that $y_k \in B(a;\gamma)$ and $d(y_j,f^j(y)) < \eps/3$
      for all $j \in \{0,1,\ldots,k\}$.
\end{description}

Take $x \in M$ and let $x_0 \in B(x;\delta)$, $x_1 \in B(f(x_0);\delta)$,
$x_2 \in B(f(x_1);\delta),\ldots$. We have to prove that
\begin{equation}
d(x_n,f^n(x)) < \eps \ \text{ for all } n \geq 0. \label{E2}
\end{equation}
By ($\ast$), $x_k \in B(a;\gamma)$ and the inequality in (\ref{E2}) holds
for every $n \in \{0,1,\ldots,k\}$. Since $d(x_k,a) < \gamma$,
\begin{equation}
d(f^n(x_k),a) < \frac{\eps}{3} \ \text{ for all } n \geq 0. \label{E3}
\end{equation}
Moreover, by applying ($\ast$) with $y = y_0 = x_k, y_1 = x_{k+1},\ldots,
y_k = x_{2k}$, we see that $x_{2k} \in B(a;\gamma)$ and
\begin{equation}
d(x_n,f^{n-k}(x_k)) < \frac{\eps}{3} \ \text{ for all }
  n \in \{k+1,\ldots,2k\}. \label{E4}
\end{equation}
By (\ref{E1}), (\ref{E3}) and (\ref{E4}), the inequality in (\ref{E2}) holds
for every $n \in \{k+1,\ldots,2k\}$. Since $d(x_{2k},a) < \gamma$,
we can repeat the argument and conclude that $x_{3k} \in B(a;\gamma)$ and
the inequality in (\ref{E2}) holds for every $n \in \{2k+1,\ldots,3k\}$.
By continuing this process, we obtain the desired result.
\end{proof}

For the generic $f \in \contcantor$, since $f$ has no periodic point,
it follows from Theorem~\ref{CCCont} that $\wt{f}$ has no point of chain
continuity.

\begin{theorem}\label{CCHom}
Suppose that $M$ has at least two points. For every $h \in \cH(M)$,
$\wt{h}$ has no point of chain continuity.
\end{theorem}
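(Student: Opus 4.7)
The plan is to argue by contradiction using Theorem~\ref{ChainH}, which lets us steer a $\delta$-pseudo-orbit from any starting measure to any prescribed target in a bounded number of steps $k_0 = k_0(\delta)$, where crucially $k_0$ depends only on $\delta$ and not on the target $\nu$ or on $h$. This uniform reachability is fatally incompatible with chain continuity, because it forces the value of the actual orbit $\wt{h}^{\,k_0}(\mu)$ to be close to every measure in $\cM(M)$ at once.

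Here is how I would lay it out. First, since $M$ has at least two points, I would fix $a,b \in M$ with $\alpha := d(a,b) > 0$, and choose any $\eps > 0$ with $2\eps < \min\{\alpha,1\} = d_P(\pi_a,\pi_b)$. Suppose, toward a contradiction, that $\wt{h}$ is chain continuous at some $\mu \in \cM(M)$. Then there is $\delta \in (0,1)$ (which we may take $\leq \eps$) such that every $\delta$-pseudo-orbit $\mu_0,\mu_1,\mu_2,\ldots$ with $\mu_0 \in B(\mu;\delta)$ satisfies $d_P(\mu_n,\wt{h}^{\,n}(\mu)) < \eps$ for all $n \geq 0$. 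Let $k_0 \in \N$ be the constant associated to this $\delta$ by Theorem~\ref{ChainH}.

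Next, applying Theorem~\ref{ChainH} with $\nu := \pi_a$ and $k := k_0$, I obtain measures $\mu_1,\ldots,\mu_{k_0}$ with $\mu_j \in B(\wt{h}(\mu_{j-1});\delta)$ (setting $\mu_0 := \mu$, which trivially lies in $B(\mu;\delta)$) and with $\mu_{k_0} = \pi_a$. Chain continuity at $\mu$ then forces
\[
d_P(\pi_a,\wt{h}^{\,k_0}(\mu)) < \eps.
\]
Repeating the same argument with $\nu := \pi_b$ and the same $k_0$ yields a second $\delta$-pseudo-orbit starting from $\mu_0 = \mu$ and terminating at $\pi_b$, hence
\[
d_P(\pi_b,\wt{h}^{\,k_0}(\mu)) < \eps.
\]
By the triangle inequality, $d_P(\pi_a,\pi_b) < 2\eps < \min\{d(a,b),1\} = d_P(\pi_a,\pi_b)$, which is absurd.

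There is essentially no obstacle beyond making sure that the constant $k_0$ furnished by Theorem~\ref{ChainH} is genuinely independent of the target measure, so that both chains (to $\pi_a$ and to $\pi_b$) can be taken of the same length $k_0$; this is exactly the uniform formulation already stated in Theorem~\ref{ChainH}. The argument uses homeomorphy only through the invocation of Theorem~\ref{ChainH}, which is why the hypothesis $h \in \cH(M)$ rather than $h \in \cC(M)$ is needed (the analogous statement in the continuous category is controlled instead by Theorem~\ref{CCCont}).
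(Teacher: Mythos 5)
Your proof is correct and uses the same underlying mechanism as the paper: the paper cites Theorem~\ref{CCCont}, whose implication (i) $\Rightarrow$ (iii) is proved by exactly the argument you give (bounded-length $\delta$-chains from $\mu$ to arbitrary unit masses, forcing $\wt{f}^{\,k_0}(\mu)$ to be $\eps$-close to every $\pi_y$ with $y$ in the eventual image, hence forcing that image to have small diameter). You simply apply Theorem~\ref{ChainH} directly with $Y = M$ rather than routing through the singleton criterion of Theorem~\ref{CCCont}, which is a harmless inlining of the same idea.
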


\begin{proof}
Follows immediately from Theorem~\ref{CCCont}.
\end{proof}

Given $f \in \cC(M)$, recall that $f$ is {\em topologically transitive}
(resp.\ {\em mixing}) if, for any pair $U,V \subset M$ of nonempty open
sets, there exists $k \in \N_0$ (resp.\ $k_0 \in \N_0$) such that
$f^k(U) \cap V \neq \emptyset$ (resp.\ for all $k \geq k_0$).

\smallskip
Given $f \in \cC(M)$ and $\delta > 0$, recall that a finite sequence
$(x_n)_{n=0,1,\ldots,k}$ of elements of $M$ is a {\em $\delta$-chain from
$x_0$ to $x_k$} if $d(f(x_n),x_{n+1}) < \delta$ for all $n=0,1,\ldots,k-1$.
In this case, we say that $k$ is the {\em length} of the chain.
Recall that $f$ is {\em chain mixing} if for every $\delta > 0$ and
for every pair $x,y \in M$, there exists $k_0 \in \N$ such that
for all $k \geq k_0$, there exists a $\delta$-chain from $x$ to $y$
of length $k$. Note that if $f$ is chain mixing, then $f$ is necessarily
surjective.

\begin{theorem}\label{ChainMixing}
For every $h \in \cH(M)$, $\wt{h}$ is chain mixing.
\end{theorem}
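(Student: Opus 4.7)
The plan is to deduce chain mixing of $\wt{h}$ directly from Theorem~\ref{ChainH}, which has essentially already proved everything we need. I would fix arbitrary $\mu,\nu \in \cM(M)$ and an arbitrary $\delta > 0$. Since only small values of $\delta$ need to be handled, I may replace $\delta$ by $\min\{\delta,1/2\}$ and thereby assume $0 < \delta < 1$, so that Theorem~\ref{ChainH} applies and produces an integer $k_0 \in \N$ depending only on $\delta$ (in particular not on $h$, $\mu$, or $\nu$).

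For any $k \geq k_0$, Theorem~\ref{ChainH} furnishes measures $\mu_1,\ldots,\mu_k \in \cM(M)$ satisfying $\mu_1 \in B(\wt{h}(\mu);\delta)$, $\mu_{j+1} \in B(\wt{h}(\mu_j);\delta)$ for $1 \le j < k$, and $\mu_k = \nu$. Setting $\mu_0 := \mu$, the sequence $(\mu_0,\mu_1,\ldots,\mu_k)$ satisfies $d_P(\wt{h}(\mu_j),\mu_{j+1}) < \delta$ for every $0 \le j < k$, begins at $\mu$ and ends at $\nu$. By the definition recalled just before the statement of Theorem~\ref{ChainMixing}, this is exactly a $\delta$-chain of length $k$ for $\wt{h}$ from $\mu$ to $\nu$. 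Since $k \geq k_0$ was arbitrary and $\mu,\nu,\delta$ were arbitrary, $\wt{h}$ is chain mixing.

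There is no genuine obstacle here — the substantive work was absorbed into Theorem~\ref{ChainC} and Theorem~\ref{ChainH}. The only point worth flagging is why the hypothesis $h \in \cH(M)$ is essential: Theorem~\ref{ChainH} uses invertibility to choose a preimage $\nu'$ with $\wt{h}^{\,k}(\nu') = \nu$, so that one can land \emph{exactly} at the prescribed target $\nu$ at step $k$. For a merely continuous (non-surjective) $f$, chain mixing of $\wt{f}$ generally fails since every chain must eventually land in $\bigcap_{n \ge 1} \wt{f}^{\,n}(\cM(M))$ (up to arbitrarily small Prohorov error), which need not be all of $\cM(M)$.
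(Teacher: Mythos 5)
Your proof is correct and follows the same route as the paper, which states that Theorem~\ref{ChainMixing} follows immediately from Theorem~\ref{ChainH}; you have simply spelled out the routine unwinding of definitions that the authors leave implicit. The closing remark about why invertibility is needed is accurate and consistent with the paper's observation that $\wt{f}$ fails to be chain mixing for the generic non-surjective $f \in \contcantor$.
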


\begin{proof}
Follows immediately from Theorem \ref{ChainH}.
\end{proof}

In view of the above theorem, it is natural to ask if $\wt{h}$ is always
mixing. Let us see that this is not the case. Indeed,
it was proved in \cite{WBauKSig75} that $\wt{h}$ topologically transitive
implies $h$ topologically transitive. As a consequence, for the
generic $h \in \homcantor$, $\wt{h}$ is not topologically transitive.

\smallskip
On the other hand, for the generic $f \in \contcantor$, since $f$ is not
surjective, it follows that $\wt{f}$ is neither topologically transitive nor
chain mixing.

\smallskip
Given $h \in \cH(M)$, recall that a sequence $(x_n)_{n \in \Z}$ is a
{\em $\delta$-pseudotrajectory} ($\delta > 0$) of $h$ if
$$
d(h(x_n),x_{n+1}) \leq \delta \ \ \text{ for all } n \in \Z.
$$
The homeomorphism $h$ has the {\em weak shadowing property} \cite{RCorSPil95}
if for every $\eps > 0$ there exists $\delta > 0$ such that for every
$\delta$-pseudotrajectory $(x_n)_{n \in \Z}$ of $h$ there exists $x \in M$
such that
$$
\{x_n : n \in \Z\} \subset \{h^n(x) : n \in \Z\}^\eps.
$$
Moreover, $h$ has the {\em shadowing property} \cite{RBow75a,RBow75b}
if for every $\eps > 0$ there exists $\delta > 0$ such that every
$\delta$-pseudotrajectory $(x_n)_{n \in \Z}$ of $h$ is {\em $\eps$-shadowed}
by a real trajectory of $h$, i.e., there exists $x \in M$ such that
$$
d(x_n,h^n(x)) < \eps \ \ \text{ for all } n \in \Z.
$$

It was proved in \cite{NBerRVer14} that for the generic $h \in \homcantor$,
the induced map $\ov{h}$ has the shadowing property. Again, we shall see
that the induced map $\wt{h}$ has a completely different behaviour.

\begin{theorem}\label{Shadowing}
For the generic $h \in \homcantor$, $\wt{h}$ does not have the weak shadowing
property.
\end{theorem}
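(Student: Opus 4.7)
The plan is to exploit the graph-theoretic description of Theorem~B to produce, for a generic $h$, two disjoint clopen subsets of $\cantor$ that are fully $h$-invariant; the mass any $\wt h$-orbit places on such a set is then a constant of motion, whereas chain mixing (Theorem~\ref{ChainH}) allows us to build a pseudotrajectory that transits between measures supported on one and on the other.

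I would fix $h \in \homcantor$ satisfying property (P) (any $m \in \N$ will do) and extract the associated partition $\p$ and integer $q$. Pick a dumbbell component $D = \{u_1,\ldots,u_{q!}\} \cup \{v_1,\ldots,v_s\} \cup \{w_1,\ldots,w_{q!}\}$ and let $a \subset u_1$, $b \subset w_1$ be the clopen sets furnished by the definition, with $h^{q!}(a) = a$ and $h^{q!}(b) = b$. Set $A := \bigcup_{j=0}^{q!-1} h^j(a)$ and $B := \bigcup_{j=0}^{q!-1} h^j(b)$. The delicate step is checking that $A$ and $B$ are disjoint clopen subsets of $\cantor$ with $h(A) = A$ and $h(B) = B$: the bar of $D$ feeds only the right loop, so if any portion of $a$ were carried into $v_1$ it could never return to $u_1$, contradicting $h^{q!}(a) = a \subset u_1$; hence $h^j(a) \subset u_{j+1}$ for each $j$, and a parallel argument places $h^j(b)$ in $w_{j+1}$, yielding both the disjointness and the full invariance.

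Once this is done, the rest is straightforward. Since $A$ and $B$ are clopen, fix $\eta > 0$ with $A^\eta = A$ and $B^\eta = B$, and set $\eps := \min(\eta,1/2)/3$. To refute weak $\eps$-shadowing, let $\delta > 0$ be arbitrary, let $k_0$ be the constant from Theorem~\ref{ChainH}, choose $z_a \in A$ and $z_b \in B$, and concatenate a $\delta$-chain of length $k_0$ from $\pi_{z_a}$ to $\pi_{z_b}$ with one of length $k_0$ back from $\pi_{z_b}$ to $\pi_{z_a}$; extending $2k_0$-periodically produces a two-sided $\delta$-pseudotrajectory $(x_n)_{n \in \Z}$ of $\wt h$ whose trace contains both $\pi_{z_a}$ and $\pi_{z_b}$. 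If some $\mu_0 \in \mcantor$ weakly $\eps$-shadowed $(x_n)$, there would exist $n_1, n_2 \in \Z$ with $d_P(\wt h^{n_1}(\mu_0), \pi_{z_a}) < \eps$ and $d_P(\wt h^{n_2}(\mu_0), \pi_{z_b}) < \eps$; applying the defining Prohorov inequality with $X = A$ and with $X = B$, together with $A^\eps = A$, $B^\eps = B$, and the identity $\wt h^n(\mu_0)(A) = \mu_0(A)$ (and likewise for $B$), would force $\mu_0(A) \geq 1-\eps$ and $\mu_0(B) \geq 1-\eps$, which is impossible for $\eps < 1/2$ since $A \cap B = \emptyset$.
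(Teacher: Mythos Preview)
Your argument is correct. You use property~(P) concretely to extract two disjoint $h$-invariant clopen sets $A$ and $B$ (one in each plate of a dumbbell), observe that $\mu\mapsto\mu(A)$ is constant along any true $\wt h$-orbit, and then use Theorem~\ref{ChainH} to build a $\delta$-pseudotrajectory oscillating between $\pi_{z_a}$ and $\pi_{z_b}$ that no orbit can weakly $\eps$-shadow. The verification that $h^j(a)\subset u_{j+1}$ (so that $A\subset\bigcup_j u_j$ and $B\subset\bigcup_j w_j$ are disjoint) is exactly right: any mass leaking from $u_1$ into $v_1$ is funneled irreversibly toward the right plate and could never satisfy $h^{q!}(a)=a$.

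The paper takes a different, more abstract route. It shows, for an arbitrary $h\in\cH(M)$, that chain mixing of $\wt h$ together with weak shadowing forces $\wt h$ to be topologically transitive; it then invokes the Bauer--Sigmund implication ``$\wt h$ transitive $\Rightarrow$ $h$ transitive'' and the fact that the generic $h\in\homcantor$ is not transitive. This buys the general statement recorded in the Remark following the proof: \emph{for every} non-transitive $h\in\cH(M)$, $\wt h$ fails weak shadowing. Your approach, by contrast, is self-contained (no appeal to \cite{WBauKSig75}) and makes the obstruction completely explicit via the conserved quantity $\mu(A)$; implicitly you are exhibiting disjoint nonempty open $\wt h$-invariant sets $\{\mu:\mu(A)>1/2\}$ and $\{\mu:\mu(B)>1/2\}$, which is precisely the non-transitivity the paper uses, but you never need to name it as such.
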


\begin{proof}
Fix a generic $h \in \homcantor$ and suppose that $\wt{h}$ has the weak
shadowing property. Let $U,V$ be a pair of nonempty open sets in
$\mcantor$. Fix $\mu \in U$ and $\nu \in V$, and choose $\eps > 0$
such that
$$
B(\mu;\eps) \subset U \ \ \ \ \text{ and } \ \ \ \
B(\nu;\eps) \subset V.
$$
Since $\wt{h}$ has the weak shadowing property, there is a $\delta > 0$
associated to this $\eps$ according to the definition of weak shadowing.
Since $\wt{h}$ is chain mixing (Theorem~\ref{ChainMixing}), there is a
$\delta$-chain $(\mu_0,\mu_1,\ldots,\mu_k)$ of $\wt{h}$ starting at
$\mu_0 = \mu$ and ending at $\mu_k = \nu$. Of course, we can extend
this $\delta$-chain to a $\delta$-pseudotrajectory $(\mu_n)_{n \in \Z}$
of $\wt{h}$. By weak shadowing, there exists $\eta \in \mcantor$ such that
$$
\{\mu_n : n \in \Z\} \subset \{\wt{h}^{\, n}(\eta) : n \in \Z\}^\eps.
$$
In particular, there are $n_1,n_2 \in \Z$ such that
$\wt{h}^{\, n_1}(\eta) \in U$ and $\wt{h}^{\, n_2}(\eta) \in V$,
and so $\wt{h}^{\, n_2-n_1}(U) \cap V \neq \emptyset$.
This implies that $\wt{h}$ is topologically transitive.
As observed after the proof of Theorem~\ref{ChainMixing},
this is a contradiction.
\end{proof}

\begin{remark}
The above proof actually establishes the following more general result:
\begin{quote}
{\it If $h \in \cH(M)$ is not topologically transitive,
then $\wt{h}$ does not have the weak shadowing property.}
\end{quote}
\end{remark}


\section{Recurrence}

\hspace*{4mm} Given a map $f : M \to M$, we denote by $P(f)$ (resp.\
$R(f)$, $\Omega(f)$, $CR(f)$), the set of all periodic (resp.\ recurrent,
nonwandering, chain recurrent) points of $f$.

\smallskip
Given partitions $\p$ and $\q$ of $\cantor$, we say that $\q$
{\em strongly refines} $\p$ if each $a \in \q$ is properly contained
in some $a' \in \p$. A sequence $(\p_n)$ of partitions of $\cantor$
is said to be {\em strongly decreasing} if $\p_{n+1}$ strongly refines $\p_n$
for all $n$. Recall that $(\p_n)$ is said to be {\em null} if
$\mesh(\p_n) \to 0$ as $n \to \infty$ \cite{NBerUDar12}.
Note that every null sequence of partitions of $\cantor$
has a strongly decreasing (and null) subsequence.

\begin{theorem}\label{PeriodicC}
For the generic $f \in \contcantor$, the following properties hold:
\begin{description}
\item {\rm (a)} $\wt{f}$ has uncountably many periodic points
      of each period $p \geq 1$.
\item {\rm (b)} Any neighborhood of any periodic point of $\wt{f}$
      of period $p$ contains uncountably many periodic points of $\wt{f}$
      of period $kp$, for each $k \in \N$.
\item {\rm (c)} $R(\wt{f}) = \Omega(\wt{f}) = CR(\wt{f})$.
\item {\rm (d)} $CR(\wt{f})$ has empty interior in $\wt{f}(\cM(\cantor))$.
\item {\rm (e)} $P(\wt{f})$ is dense in $CR(\wt{f})$.
\end{description}
\end{theorem}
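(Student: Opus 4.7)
The plan is to exploit property (Q) of Theorem A. Fix $f\in\contcantor$ satisfying (Q). For each $m\in\N$ let $\p_m$ be the associated partition with plate weight $q_m!$ (a multiple of $m$), balloon components $B^{(m)}_1,\ldots,B^{(m)}_{N_m}$ with loop vertices $w^{(m)}_{i,1},\ldots,w^{(m)}_{i,q_m!}$, and set the loop attractor $K^{(m)}_i:=\bigcap_{n\ge 0}f^{nq_m!}\bigl(w^{(m)}_{i,1}\bigr)$, which is nonempty with $f^{q_m!}(K^{(m)}_i)=K^{(m)}_i$ by strictness. Let $X^{(m)}_i:=\bigcup_{j=0}^{q_m!-1}f^j(K^{(m)}_i)$ and $A:=\bigcap_n f^n(\cantor)$.

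For (a), given $p\ge 1$, apply (Q) with $m=p$, so $p\mid q!$; set $r:=q!/p$. By Krylov--Bogolyubov, fix an $f^{q!}|_{K_i}$-invariant probability measure $\nu_i$ on $K_i$, and define
$$
\mu_i := \tfrac{1}{r}\sum_{j=0}^{r-1}\wt{f}^{\,jp}(\nu_i).
$$
The identity $\wt{f}^{\,q!}(\nu_i)=\nu_i$ yields $\wt{f}^{\,p}(\mu_i)=\mu_i$, while for $0<k<p$ the support of $\wt{f}^{\,k}(\mu_i)$ occupies loop positions incongruent to $1\pmod p$, so $\mu_i$ has period exactly $p$. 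Uncountably many period-$p$ measures then arise via convex combinations of $\mu_i$'s from distinct balloons in a sufficiently fine partition; should $\p$ contain only one balloon, refining via (Q) with a larger $m$ decomposes $K_i$ into several sub-attractors, supplying multiple distinct $\nu_i$'s. For (b), given a period-$p$ point $\mu$ and a neighborhood of it, pick $m$ large with $\mesh(\p_m)$ small and $kp\mid q_m!$, and run the construction of (a) at this finer scale to produce period-$kp$ measures approximating $\mu$.

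For (c), Theorem \ref{Equicontinuous} yields $\wt{f}$ equicontinuous at every point; the general principle that equicontinuous continuous self-maps of compact metric spaces satisfy $R=\Omega=CR$ then gives the required equalities, since equicontinuity lets $\delta$-chains from $\mu$ to $\mu$ be shadowed by near-orbits, forcing genuine recurrence. For (d), since generically $f$ is not surjective, choose $z\in f(\cantor)\setminus A$ and $y\in\cantor$ with $f(y)=z$. For $\mu=\wt{f}(\nu)\in\wt{f}(\mcantor)$ and $\eps>0$, set $\mu_\eps:=(1-\eps)\mu+\eps\pi_z=\wt{f}\bigl((1-\eps)\nu+\eps\pi_y\bigr)$; this lies in $\wt{f}(\mcantor)$, is within $\eps$ of $\mu$, and carries mass near $z\notin A$. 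Because $f^k(\cantor)$ eventually avoids a clopen neighborhood of $z$ disjoint from $A$, no $\delta$-chain brings $\mu_\eps$ back to itself for small $\delta$, so $\mu_\eps\notin CR(\wt{f})$. For (e), this argument also shows $CR(\wt{f})\subset\{\mu:\mathrm{supp}(\mu)\subset A\}$; given $\mu\in CR(\wt{f})$ and $\eps>0$, pick $m$ with $\mesh(\p_m)<\eps$ and replace $\mu|_{X^{(m)}_i}$ on each balloon by the averaged invariant-measure construction of (a), producing a periodic measure within $\eps$ of $\mu$.

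The principal difficulties are (i) securing uncountably many periodic measures in (a) when a given partition yields only a single balloon, which requires iterating the refinement provided by (Q); and (ii) verifying the characterization $CR(\wt{f})=\{\mu:\mathrm{supp}(\mu)\subset A\}$ underpinning (d) and (e). The hard direction of (ii) is that measures supported on $A$ are chain recurrent, which should follow from Theorem \ref{ChainMixing} applied to $\wt{f}|_A$, provided $f|_A$ is a homeomorphism---a consequence of the strict-balloon structure surviving under refinement.
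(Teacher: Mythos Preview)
Your overall strategy is sound and in several places takes a genuinely different route from the paper. For (a) you invoke Krylov--Bogolyubov on the loop attractor and average, whereas the paper builds explicit measures via nested ``admissible sequences'' of balloons; both work, and your uncountability argument via convex combinations across distinct balloons (available after refinement, since every initial vertex at level $m$ splits into at least two initial vertices at level $m{+}1$) is legitimate. For (c) you appeal to Theorem~\ref{Equicontinuous} and the general fact that equicontinuous self-maps of compact metric spaces satisfy $R=\Omega=CR$; the paper instead argues directly from the balloon combinatorics (its Step~3 shows $\mu\in CR(\wt f)$ forces $\mu(v_{i,j})=0$, and Step~4 deduces recurrence). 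Your route is slicker, and your identification of $CR(\wt f)$ with measures supported on $A=\bigcap_n f^n(\cantor)$ is correct and useful for (d),(e). Your observation that $f|_A$ is a homeomorphism (so Theorem~\ref{ChainMixing} applies to $\wt f|_{\cM(A)}$) is right, though the reason is not strictness per~se but that at every scale $f$ permutes the loop vertices, hence separates distinct points of $A$.

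There are, however, two real gaps. First, your justification for (c) is wrong: equicontinuity does \emph{not} let $\delta$-chains be shadowed by orbits---the error along a chain of length $k$ accumulates like $\sum_{j<k}\omega_{f^j}(\delta)$ and is unbounded in $k$. The statement $R=CR$ for equicontinuous maps is nonetheless true, but it needs a different argument: one shows $CR(\wt f)\subset\bigcap_n\wt f^{\,n}(\mcantor)=\cM(A)$ (this holds for any continuous map, by uniform continuity of each $f^n$), and then that an equicontinuous \emph{surjection} on a compact space has $R$ equal to the whole space (Arzel\`a--Ascoli makes $\overline{\{f^n\}}$ a compact abelian semigroup of surjections whose unique idempotent must be the identity). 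You should supply this, not the shadowing claim.

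Second, your sketch for (e) (and hence (b)) omits the essential step. Writing ``replace $\mu|_{X_i^{(m)}}$ by the averaged invariant-measure construction of (a)'' does not by itself produce something close to $\mu$: the period-$p$ measures from (a) put \emph{equal} mass on a fixed residue class of loop positions, whereas $\mu$ may distribute mass arbitrarily across the $q_m!$ loop vertices. The paper's Step~6 first uses recurrence to choose $p$ with $d_P(\wt f^{\,p}(\mu),\mu)<\delta$, deduces via Lemma~\ref{Lemma1} that the loop-masses $\mu(w_{i,j})$ are nearly $p$-periodic in $j$, and only then assembles a nearby periodic measure as a weighted sum over the residue classes. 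Your outline needs this bridge between recurrence of $\mu$ and the approximating period before the construction from (a) can be applied.
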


\begin{proof}
Let $f \in \contcantor$ satisfy property (Q). We shall divide the proof
in seven steps.

\medskip
\noindent {\bf Step 1.} Let $\p$ be a partition of $\cantor$ such that
every component of $\gr(f,\p)$ is a balloon of a certain type $(q!,q!)$.
Choose one such component $B$; say
$$
B = \{v_1,\ldots,v_{q!}\} \cup \{w_1,\ldots,w_{q!}\},
$$
with usual labeling. Choose also a vertex $w \in \{w_1,\ldots,w_{q!}\}$
and an integer $p \in \N$. Let $k \in \N$ be the smallest integer such that
$$
f^{kp}(w) \subset w.
$$
Let $a_0,\ldots,a_{k-1} \in \{w_1,\ldots,w_{q!}\}$ be determined by the
relations
$$
f^{jp}(w) \subset a_j, \ \ \ j = 0,\ldots,k-1.
$$
Then, there are uncountably many periodic points $\mu$ of $\wt{f}$
of period $p$ such that
\begin{equation}
\mu(a_0) = \mu(a_1) = \cdots = \mu(a_{k-1}) \label{A}
\end{equation}
and
\begin{equation}
\mu(a) = 0 \ \ \text{ for all }
           a \in \p \backslash \{a_0,a_1,\ldots,a_{k-1}\}. \label{B}
\end{equation}

\medskip
Indeed, let us fix a strongly decreasing null sequence $(\p_n)_{n \in \N}$
of partitions of $\cantor$ such that each component of $\gr(f,\p_n)$
is a balloon of type $(q_n!,q_n!)$, $\p_1$ refines $\p$ and $q_1 > p$.
By an {\em admissible sequence} we mean a sequence $\cB:= (B_n)_{n \in \N}$,
where each $B_n$ is a component (balloon) of $\gr(f,\p_n)$, such that
the initial vertex of $B_1$ is contained in the initial vertex of $B$ and
the initial vertex of $B_{n+1}$ is contained in the initial vertex of $B_n$
for each $n \in \N$. To each admissible sequence $\cB$, we shall associate
a measure $\mu_\cB \in \mcantor$ which will be constructed as follows. Write
$$
B_n = \{v_{n,1},\ldots,v_{n,q_n!}\} \cup \{w_{n,1},\ldots,w_{n,q_n!}\},
$$
with usual labeling. We extend the ``loop'' $\{w_{n,1},\ldots,w_{n,q_n!}\}$
to a sequence $(w_{n,j})_{j \in \N}$ by considering $w_{n,i} = w_{n,j}$
whenever $i \equiv j$ mod $q_n!$. It is easy to verify that the collection
$$
\cS:= \{\emptyset\} \cup \p_1 \cup \p_2 \cup \p_3 \cup \ldots
$$
is a semiring of subsets of $\cantor$ (i.e., $a,b \in \cS$ implies that
$a \cap b \in \cS$ and that $a \backslash b$ is a finite union of pairwise
disjoint elements of $\cS$).
Since $\p_1$ refines $\p$ and $v_{1,1} \subset v_1$, it follows that
$$
q_1 \geq q \ \ \ \text{ and } \ \ \ w_{1,1} \subset w_1.
$$
Moreover, since $\p_{n+1}$ refines $\p_n$ and $v_{n+1,1} \subset v_{n,1}$,
we also have that
$$
q_{n+1} \geq q_n \ \ \ \text{ and } \ \ \ w_{n+1,1} \subset w_{n,1}
  \ \ \ \ \ (n \in \N).
$$
As a consequence, there is a smallest $t \in \N$ such that
$$
w_{n,t} \subset w \ \ \text{ for all } n \in \N.
$$
We define a set function $\varphi : \cS \to [0,1]$ by
$$
\varphi(a):= \frac{p}{q_n!} \ \ \text{ if } a = w_{n,t+jp} \text{ for some }
  n \in \N \text{ and some } 0 \leq j \leq \frac{q_n!}{p} - 1
$$
and
$$
\varphi(a):= 0 \ \ \text{ otherwise}.
$$

We claim that
\begin{equation}
\varphi(a) = \sum_{b \in I_{\p_{n+1}}(a)} \varphi(b), \label{C}
\end{equation}
for every $n \in \N$ and every $a \in \p_n$.
Indeed, for each $0 \leq j \leq \frac{q_n!}{p} - 1$,
$$
w_{n+1,t+ip} \subset w_{n,t+jp} \ \Longleftrightarrow \
i = j + \ell \,\frac{q_n!}{p}
\text{ for some } 0 \leq \ell \leq \frac{q_{n+1}!}{q_n!} - 1.
$$
Therefore,
$$
\sum_{b \in I_{\p_{n+1}}(w_{n,t+jp})} \varphi(b)
  = \sum_{\ell = 0}^{\frac{q_{n+1}!}{q_n!} - 1} \varphi(w_{n+1,t+jp+\ell q_n!})
  = \frac{q_{n+1}!}{q_n!} \cdot \frac{p}{q_{n+1}!}
  = \frac{p}{q_n!}
  = \varphi(w_{n,t+jp}).
$$
On the other hand, if $a \neq w_{n,t+jp}$ for all $0 \leq j \leq
\frac{q_n!}{p} - 1$, then no set of the form $w_{n+1,t+ip}$
($0 \leq i \leq \frac{q_{n+1}!}{p} - 1$) is contained in $a$, and so
$$
\varphi(a) = 0 = \sum_{b \in I_{\p_{n+1}}(a)} \varphi(b).
$$
This completes the proof of our claim.

\smallskip
Let us now prove that $\varphi$ is finitely additive.
Let $a \in \cS$ be nonempty and assume that $a$ is the union of a finite
collection $\cC$ of pairwise disjoint nonempty elements of $\cS$.
We have to prove that
\begin{equation}
\varphi(a) = \sum_{c \in \cC} \varphi(c). \label{D}
\end{equation}
Since this is obvious if $\cC = \{a\}$, let us assume that this is not
the case. Let $n \in \N$ be such that $a \in \p_n$ and let $m \in \N$ be
the largest positive integer such that $\cC \cap \p_{n+m} \neq \emptyset$.
Define
$$
\cC_j:= \cC \cap \p_{n+j} \ \ \text{ for } j = 1,\ldots,m.
$$
Then $\cC = \cC_1 \cup \ldots \cup \cC_m$. We shall prove (\ref{D})
by induction on $m$. If $m = 1$ then $\cC = \cC_1 = I_{\p_{n+1}}(a)$,
and so (\ref{D}) follows from (\ref{C}). Assume $m \geq 2$ and the result
true with $m-1$ in place of $m$. Choose $b \in \cC_m$. There is a unique
$b' \in \p_{n+m-1}$ such that $b \subset b'$. Since $\p_{n+m}$ strongly
refines $\p_{n+m-1}$, $b' \neq b$. Moreover, since $b \subset a$,
we must have $b' \subset a$. Thus, $b' \not\in \cC$ and
$I_{\p_{n+m}}(b') \subset \cC$. We define
$$
\cC':= (\cC \backslash I_{\p_{n+m}}(b')) \cup \{b'\}.
$$
Then, $a = \cup \cC'$ (with disjoint union) and
$\sum_{c' \in \cC'} \varphi(c') = \sum_{c \in \cC} \varphi(c)$
because of (\ref{C}). We can repeat this argument until we obtain
a finite collection $\cD$ of pairwise disjoint nonempty elements of $\cS$
such that
$$
a = \cup \cD, \ \ \ \
\sum_{d \in \cD} \varphi(d) = \sum_{c \in \cC} \varphi(c) \ \ \
\text{ and } \ \ \
\cD \cap \p_{n+j} = \emptyset \ \text{ for all } j \geq m.
$$
By the induction hypothesis,
$$
\sum_{c \in \cC} \varphi(c) = \sum_{d \in \cD} \varphi(d) = \varphi(a),
$$
as was to be shown.

\smallskip
Since the elements of $\cS$ are clopen, it is not possible to write an
element of $\cS$ as a countably infinite union of pairwise disjoint nonempty
elements of $\cS$. As a consequence, $\varphi$ is countably additive.
By the extension theorem of measure theory,
there exists a measure $\mu_\cB$ defined on a $\sigma$-algebra $\cA$
containing $\cS$ that extends $\varphi$. Since every open subset of $\cantor$
can be written as a countable union of elements of $\cS$, $\cA$ contains
the Borel subsets of $\cantor$. Hence, we may regard $\mu_\cB$ as a Borel
measure. Since
$$
\mu_\cB(\cantor) = \sum_{a \in \p_1} \mu_\cB(a)
  = \sum_{a \in \p_1} \varphi(a)
  = \sum_{j=0}^{\frac{q_1!}{p}-1} \varphi(w_{1,t+jp})
  = \sum_{j=0}^{\frac{q_1!}{p}-1} \frac{p}{q_1!}
  = 1,
$$
we see that $\mu_\cB$ is a probability measure. In other words,
$$
\mu_\cB \in \mcantor.
$$
For each $n \in \N$, since $\mu_\cB(f^{-p}(a)) = \mu_\cB(a)$ for all
$a \in \p_n$, it follows from Lemma~\ref{Lemma2} that
$$
d_P(\wt{f}^{\, p}(\mu_\cB),\mu_\cB) \leq \mesh \p_n.
$$
Since $\mesh \p_n \to 0$ as $n \to \infty$, we obtain
$$
\wt{f}^{\, p}(\mu_\cB) = \mu_\cB.
$$
On the other hand,
$\mu_\cB(f^{-j}(w_{1,t+p})) = 0 \neq p/q_1! = \mu_\cB(w_{1,t+p})$
for each $1 \leq j < p$, which implies that
$$
\wt{f}^{\, j}(\mu_\cB) \neq \mu_\cB \ \ \text{ for each } 1 \leq j < p.
$$
Therefore, $\mu_\cB$ is a periodic point of $\wt{f}$ of period $p$.

\smallskip
If $\cB':= (B'_n)_{n \in \N}$ is an admissible sequence with $\cB' \neq \cB$,
then $B'_m \neq B_m$ for some $m \in \N$, and so $\mu_\cB(\cup B_m) = 1$
whereas $\mu_{\cB'}(\cup B_m) = 0$. This shows that distinct admissible
sequences generate distinct probability measures. Since the set of all
admissible sequences is uncountable (by a simple diagonal argument),
we conclude that
$$
\{\mu_\cB : \cB \text{ is an admissible sequence}\}
$$
is an uncountable set of periodic points of $\wt{f}$ of period $p$.
By construction, it is easy to see that each $\mu_\cB$ satisfies
(\ref{A}) and (\ref{B}). Thus, the proof of Step 1 is complete.

\medskip
\noindent {\bf Step 2.} Proof of (a).

\medskip
Property (a) follows immediately from Step 1.

\medskip
\noindent {\bf Step 3.} Let $\p$ be a partition of $\cantor$ such that
every component of $\gr(f,\p)$ is a balloon of type $(q!,q!)$. Let
$$
B_i := \{v_{i,1},\ldots,v_{i,q!}\} \cup \{w_{i,1},\ldots,w_{i,q!}\}
  \ \ \ (1 \leq i \leq N)
$$
be the components (balloons) of $\gr(f,\p)$. For every $\mu \in CR(\wt{f})$,
$$
\mu(v_{i,j}) = 0 \ \ \text{ for all } i \text{ and } j.
$$

\medskip
Choose $0 < \delta < \delta(\p)$.
Since $\mu$ is a chain recurrent point of $\wt{f}$, there is a $\delta$-chain
$(\mu_n)_{n=0,1,\ldots,k}$ from $\mu_0:= \mu$ to $\mu_k:= \mu$. Since
$d_P(\wt{f}(\mu_n),\mu_{n+1}) < \delta$, Lemma~\ref{Lemma1} gives
$$
|\mu_n(f^{-1}(a)) - \mu_{n+1}(a)| < \delta \ \ \text{ for all } a \in \p
 \ \ (0 \leq n \leq k-1).
$$
Since $f^{-q!}(v_{i,j}) = \emptyset$, it follows that
$$
\mu(v_{i,j}) < q!\delta \ \ \ \ \ (1 \leq i \leq N, 1 \leq j \leq q!).
$$
Since $\delta >0$ can be chosen arbitrarily small, 
$\mu(v_{i,j}) = 0$ for each $i$ and $j$.

\medskip
\noindent {\bf Step 4.} Proof of (c).

\medskip
Fix $\mu \in CR(\wt{f})$. Given $\eps > 0$, let $\p$ be a partition of
$\cantor$ of mesh $< \eps$ such that every component of $\gr(f,\p)$ is a
balloon of type $(q!,q!)$. Let
$$
B_i := \{v_{i,1},\ldots,v_{i,q!}\} \cup \{w_{i,1},\ldots,w_{i,q!}\}
  \ \ \ (1 \leq i \leq N)
$$
be the components (balloons) of $\gr(f,\p)$. Since
$f^{-q!}(v_{i,j}) = \emptyset$ and $f^{-q!}(w_{i,j}) = v_{i,j} \cup w_{i,j}$,
it follows from Step~3 that
$$
\mu(f^{-q!}(a)) = \mu(a) \ \ \text{ for all } a \in \p.
$$
Thus, by Lemma~\ref{Lemma2}, $d_P(\wt{f}^{\, q!}(\mu),\mu)) < \eps$.
This proves that $\mu \in R(\wt{f})$.

\medskip
\noindent {\bf Step 5.} Proof of (d).

\medskip
Let $\mu:= \wt{f}(\nu)$ be an arbitrary element in the range of $\wt{f}$.
Let $\p$ be a partition of $\cantor$ such that every component of $\gr(f,\p)$
is a balloon of type $(q!,q!)$ with $q \geq 2$. Choose one such component
$$
B:= \{v_1,\ldots,v_{q!}\} \cup \{w_1,\ldots,w_{q!}\}
$$
and choose a point $z \in v_1$. For each $\lambda \in (0,1)$, define
$$
\mu_\lambda:= (1-\lambda)\mu + \lambda \pi_{f(z)}.
$$
Note that each $\mu_\lambda$ belons to the range of $\wt{f}$ because
$\mu_\lambda = \wt{f}((1-\lambda)\nu + \lambda \pi_z))$. Since
$\mu_\lambda(v_2) \geq \lambda > 0$, it follows from Step~3 that
$\mu_\lambda \not\in CR(\wt{f})$. Moreover, by Lemma~\ref{Lemma4},
$d_P(\mu_\lambda,\mu) \leq \lambda$.
Thus, there are points of $\wt{f}(\cM(\cantor)) \backslash CR(\wt{f})$
arbitrarily close to $\mu$.

\medskip
\noindent {\bf Step 6.} Proof of (e).

\medskip
Fix $\mu \in R(\wt{f})$ and $\eps > 0$. Let $\p$ and $B_1,\ldots,B_N$
be as in Step~4. Define
$$
\delta := \min\Big\{\delta(\p),\frac{\mesh(\p)}{2q!\card(\p)}\Big\}.
$$
Since $\mu$ is a recurrent point of $\wt{f}$, there exists $p \in \N$
such that $d_P(\wt{f}^{\, p}(\mu),\mu) < \delta$. By Lemma~\ref{Lemma1},
\begin{equation}
|\mu(f^{-p}(a)) - \mu(a)| < \delta \ \ \text{ for all } a \in \p. \label{Equa1}
\end{equation}
Moreover, by Step~3,
\begin{equation}
\mu(v_{i,j}) = 0 \ \ \text{ for all } i \text{ and } j. \label{Equa2}
\end{equation}
Define
$$
W_i:= w_{i,1} \cup \ldots \cup w_{i,q!} \ \ \ (1 \leq i \leq N).
$$
If $a$ is some $w_{i,j}$, then the sequence
$$
a, \ f^{-p}(a) \cap W_i, \ f^{-2p}(a) \cap W_i,\ldots
$$
is periodic. Let $k$ be the period of this sequence. Note that $k$ does not
depend on $i$ or $j$. Hence, each ``loop'' $\{w_{i,1},\ldots,w_{i,q!}\}$
can be partitioned in sets
$$
\{a_{i,r,0},a_{i,r,1},\ldots,a_{i,r,k-1}\} \ \ \ (1 \leq r \leq q!/k)
$$
satisfying
$$
a_{i,r,t} = f^{-tp}(a_{i,r,0}) \cap W_i \
  \text{ for } 1 \leq t \leq k - 1
$$
and
$$
a_{i,r,0} = f^{-kp}(a_{i,r,0}) \cap W_i.
$$
By (\ref{Equa1}) and (\ref{Equa2}),
\begin{equation}
|\mu(a_{i,r,t}) - \mu(a_{i,r,0})| < t\delta < q!\delta \
  \text{ for all } 1 \leq t \leq k-1.
  \label{Equa3}
\end{equation}
Define
$$
d_{i,r}:= \mu(a_{i,r,0}) + \mu(a_{i,r,1}) + \cdots + \mu(a_{i,r,k-1}).
$$
It follows from (\ref{Equa3}) that
\begin{equation}
\Big| \frac{d_{i,r}}{k} - \mu(a_{i,r,0}) \Big| < q!\delta.
  \label{Equa4}
\end{equation}
By Step 1, for each $ 1 \leq i \leq N$ and each $1 \leq r \leq q!/k$,
there are uncountably many periodic points $\mu_{i,r}$ of $\wt{f}$
of period $p$ such that
$$
\mu_{i,r}(a_{i,r,0}) = \mu_{i,r}(a_{i,r,1}) = \cdots = \mu_{i,r}(a_{i,r,k-1})
$$
and
$$
\mu_{i,r}(a) = 0 \ \text{ for all }
  a \in \p \backslash \{a_{i,r,0},a_{i,r,1},\ldots,a_{i,r,k-1}\}.
$$
Without loss of generality, we may assume $d_{1,1} \neq 0$. We fix one such
periodic point $\mu_{i,r}$ for each $(i,r) \neq (1,1)$ and consider the
uncountably many possibilities for the periodic point $\mu_{1,1}$.
In this way we have uncountably many periodic points of $\wt{f}$ of the
form
\begin{equation}
\mu':= \sum_{i=1}^N \sum_{r=1}^{q!/k} d_{i,r} \mu_{i,r}, \label{Equa5}
\end{equation}
satisfying
$$
\wt{f}^{\, p}(\mu') = \mu'.
$$
Given $1 \leq j < p$, it is not possible that two of these periodic points
have period $j$. Indeed, assume that
$$
d_{1,1} \mu_{1,1} + \sum_{(i,r) \neq (1,1)} d_{i,r} \mu_{i,r}
\ \ \ \ \text{ and } \ \ \ \
d_{1,1} \mu'_{1,1} + \sum_{(i,r) \neq (1,1)} d_{i,r} \mu_{i,r}
$$
have period $j$. Then
\begin{equation}
\wt{f}^{\, j}(\mu_{1,1}) - \wt{f}^{\, j}(\mu'_{1,1})
  = \mu_{1,1} - \mu'_{1,1}. \label{Equa6}
\end{equation}
By the way the measures are constructed in Step 1,
$\mu_{1,1}$ and $\mu'_{1,1}$ correspond to distinct admissible sequences
and so there is a Borel set $b$ in $\cantor$ such that
$$
\mu_{1,1}(b) > 0, \ \mu_{1,1}(f^{-j}(b)) = 0 \ \ \text{ and } \ \
\mu'_{1,1}(b) = \mu'_{1,1}(f^{-j}(b)) = 0,
$$
which contradicts (\ref{Equa6}). Thus, uncountably many measures of the
form (\ref{Equa5}) are periodic points of $\wt{f}$ of period $p$.
Let $\mu'$ be as in (\ref{Equa5}). We shall prove that
\begin{equation}
|\mu'(a) - \mu(a)| < \frac{\mesh(\p)}{\card(\p)} \ \
  \text{ for all } a \in \p. \label{Equa7}
\end{equation}
Indeed, if $a$ is some $v_{i,j}$, then $\mu'(a) = 0 = \mu(a)$ because of
(\ref{Equa2}). If $a$ is some $w_{i,j}$, then there are unique
$1 \leq r \leq q!/k$ and $0 \leq t \leq k-1$ such that $a = a_{i,r,t}$.
Hence,
$$
\mu'(a) = d_{i,r} \mu_{i,r}(a) = \frac{d_{i,r}}{k}\cdot
$$
By (\ref{Equa3}) and (\ref{Equa4}), $|\mu'(a) - \mu(a)| < 2q!\delta$.
By our choice of $\delta$, we also obtain the inequality in (\ref{Equa7})
in this case. Therefore, Lemma~\ref{Lemma2} tell us that
$d_P(\mu',\mu) < \eps$, as was to be shown.

\medskip
\noindent {\bf Step 7.} Proof of (b).

\medskip
In the proof of Step 6, if $\mu$ is already a periodic point of $\wt{f}$,
of period $t$ say, then the $p$ can be chosen to be any multiple $kt$ of $t$,
and so we obtain uncountably many periodic points of $\wt{f}$ of period $kt$
in the $\eps$-neighborhood of $\mu$.
\end{proof}

Let us now see that a result similar to Theorem~\ref{PeriodicC} holds
in the case of homeomorphisms.

\begin{theorem}\label{PeriodicH}
For the generic $h \in \homcantor$, the following properties hold:
\begin{description}
\item {\rm (a)} $\wt{h}$ has uncountably many periodic points
      of each period $p \geq 1$.
\item {\rm (b)} Any neighborhood of any periodic point of $\wt{h}$
      of period $p$ contains uncountably many periodic points of $\wt{h}$
      of period $kp$, for each $k \in \N$.
\item {\rm (c)} $R(\wt{h}) = \Omega(\wt{h})$ and $CR(\wt{h}) = \cM(\cantor)$.
\item {\rm (d)} $\Omega(\wt{f})$ has empty interior in $\cM(\cantor)$.
\item {\rm (e)} $P(\wt{h})$ is dense in $\Omega(\wt{h})$.
\end{description}
\end{theorem}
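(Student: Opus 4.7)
The plan is to run the scheme of Theorem~\ref{PeriodicC}, replacing balloons by dumbbells. Fix $h\in\homcantor$ satisfying property~(P). For each component $D_i$ of $\gr(h,\p)$, with left loop vertices $u_{i,1},\ldots,u_{i,q!}$, path vertices $v_{i,1},\ldots,v_{i,s_i}$, and right loop vertices $w_{i,1},\ldots,w_{i,q!}$, let
\[
X_{L,i}:= \bigcap_{n\in\Z} h^{n}(u_{i,1}\cup\cdots\cup u_{i,q!}), \qquad
X_{R,i}:= \bigcap_{n\in\Z} h^{n}(w_{i,1}\cup\cdots\cup w_{i,q!})
\]
be the maximal $h$-invariant subsets of the loop vertex sets (nonempty by the left/right loop condition), and set $X:=\bigcup_i(X_{L,i}\cup X_{R,i})$. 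The structural claim I aim for is
\[
\Omega(\wt{h}) \;=\; R(\wt{h}) \;=\; \{\mu\in\cM(\cantor):\mu(X)=1\},
\]
which together with $CR(\wt{h})=\cM(\cantor)$ (immediate from Theorem~\ref{ChainMixing}) yields~(c); the remaining parts follow.

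I would first adapt Step~1 of the proof of Theorem~\ref{PeriodicC}: given $p\geq 1$, take a strongly decreasing null sequence $(\p_n)$ of partitions whose graphs consist of balanced dumbbells of plate weight $q_n!$ with $p\mid q_n!$, select a component and its left loop, and run admissible sequences of nested left loops inside it. Each admissible sequence produces, by the same semiring extension as in the proof of Theorem~\ref{PeriodicC}, a probability measure supported on the corresponding sub-fiber of $X_{L,i}$ that is $\wt{h}^p$-invariant of period exactly~$p$, and uncountable distinctness yields~(a). Part~(b) follows by the same neighborhood refinement as Step~7 of that proof: any periodic $\mu$ of period $t$ and any of its neighborhoods contain uncountably many of these measures with period $p=kt$ once the partition is fine enough.

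The heart of the proof is $\Omega(\wt{h})\subset\{\mu:\mu(X)=1\}$. Fix a component $i$ and let $T:\cantor\to\{0,1,\ldots,\infty\}$ be the first entry time of the orbit to $W_i:=w_{i,1}\cup\cdots\cup w_{i,q!}$, so $T(x)=\infty$ precisely when $x\in X_{L,i}\cup(\cantor\setminus D_i)$. Each level set $\{T\leq n\}=h^{-n}(W_i)$ is clopen, so for any fixed $c>1$ the function $g:=c^{-T}$ (with $c^{-\infty}:=0$) is continuous on $\cantor$, and a direct computation gives $g\circ h\geq g$ pointwise and
\[
\Phi_i(\wt{h}^{n}\nu)\;=\;\nu(\{T\leq n\})+\int_{\{n<T<\infty\}}c^{\,n-T}\,d\nu\;\geq\;\nu(\{T\leq n\}),
\]
where $\Phi_i(\nu):=\int g\,d\nu$ is continuous in the weak topology. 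Given $\mu\in\Omega(\wt{h})$, take witnesses $\nu_k\to\mu$, $n_k\geq 1$, $\wt{h}^{n_k}\nu_k\to\mu$. If $(n_k)$ admits a bounded subsequence then $\mu$ is periodic, say of period $p$, and telescoping $\Phi_i(\wt{h}^{j+1}\mu)-\Phi_i(\wt{h}^{j}\mu)=(c-1)\int_{\{0<T<\infty\}}g\,d(\wt{h}^{j}\mu)$ against $\Phi_i(\wt{h}^p\mu)=\Phi_i(\mu)$ forces each term to vanish, so $\mu(\{0<T<\infty\})=0$. Otherwise $n_k\to\infty$, and continuity of $\Phi_i$ gives $\Phi_i(\wt{h}^{n_k}\nu_k)\to\Phi_i(\mu)$; meanwhile, for each fixed $N$, clopenness of $\{T\leq N\}$ yields $\nu_k(\{T\leq n_k\})\geq\nu_k(\{T\leq N\})\to\mu(\{T\leq N\})$, so the displayed inequality produces $\Phi_i(\mu)\geq\mu(\{T\leq N\})$ and, in the limit $N\to\infty$, $\Phi_i(\mu)\geq\mu(\{T<\infty\})$. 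Since $g\leq\mathbf{1}_{\{T<\infty\}}$, equality holds, forcing $g=1$ $\mu$-a.e.\ on $\{T<\infty\}$ and hence $\mu(\{0<T<\infty\})=0$. This kills the transient bar and transient left-loop mass of $D_i$; applying the same argument to $h^{-1}$ (which satisfies~(P) with left and right loops interchanged) kills the transient right-loop mass, and summing over $i$ gives $\mu(X)=1$. For the converse, if $\mu(X)=1$ then $\wt{h}^{\,q_n!}\mu(a)=\mu(a)$ for every $a\in\p_n$ (each loop vertex intersects $X$ in an $h^{q_n!}$-fixed set, while bar vertices and their preimages in $X$ are empty), so Lemma~\ref{Lemma2} gives $d_P(\wt{h}^{\,q_n!}\mu,\mu)\leq\mesh(\p_n)\to 0$, hence $\mu\in R(\wt{h})$.

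Parts~(d) and~(e) are then quick. For~(d), given any $\mu\in\cM(\cantor)$ and $\eps\in(0,1)$, picking $z$ in any bar vertex of any dumbbell and forming $(1-\eps)\mu+\eps\pi_z$ gives, by Lemma~\ref{Lemma4}, a measure within $\eps$ of $\mu$ that carries positive bar mass and therefore lies outside $\Omega(\wt{h})$. For~(e), given $\mu\in\Omega(\wt{h})$ with $\mu(X)=1$, decompose $\mu$ across the loop fibers and approximate each restriction by convex combinations of the periodic measures from Step~1 at a sufficiently fine partition level, exactly as in Step~6 of Theorem~\ref{PeriodicC}; summing recovers a periodic measure arbitrarily close to~$\mu$. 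I expect the main obstacle to be verifying the continuity of $g=c^{-T}$ (rather than mere lower semicontinuity of, for instance, $\nu\mapsto\nu(X_{L,i})$): this is what delivers the sharp equality $\Phi_i(\mu)=\mu(\{T<\infty\})$ rather than a weaker inequality, and it rests on the dumbbell structure making each $\{T=n\}$ clopen.
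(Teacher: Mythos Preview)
Your proposal is correct and the overall architecture matches the paper's, but your argument for the key structural step --- that every $\mu\in\Omega(\wt{h})$ gives zero mass to the transient part of each dumbbell --- is genuinely different from the paper's. The paper proves only the weaker statement $\mu(v_{i,j})=0$ and $\mu(h^{-n}(v_{i,1}))=0$ (its claim~(\ref{1})) by a direct Prohorov-metric argument: it refines $\p$ to a partition $\p'$ that isolates the clopen set $h^{-m}(v_{i,s_i})$, takes a nonwandering witness $\nu$ with $d_P(\nu,\mu)$ and $d_P(\wt{h}^{\,t}\nu,\mu)$ both small, and then iterates the estimate of Lemma~\ref{Lemma1} along the right loop to squeeze $\nu(h^{-m}(v_{i,s_i}))$ below a multiple of $\delta$. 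Your route via the continuous Lyapunov functional $\Phi_i(\nu)=\int c^{-T}\,d\nu$ is cleaner and more conceptual: continuity of $c^{-T}$ (which, as you note, hinges on each level set $\{T\le n\}=h^{-n}(W_i)$ being clopen) lets you pass to the limit along the nonwandering witnesses and forces the exact equality $\Phi_i(\mu)=\mu(\{T<\infty\})$, hence $\mu(\{0<T<\infty\})=0$. Coupling this with the same argument for $h^{-1}$ yields the stronger conclusion $\mu(X)=1$, which the paper never states explicitly. What the paper's approach buys is that it stays entirely within the elementary Lemma~\ref{Lemma1}/Lemma~\ref{Lemma2} toolkit already in use; what yours buys is a transparent dynamical reason (a monotone potential that is constant along orbits only on the invariant set) and a sharper structural description of $\Omega(\wt{h})$.

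One expository point to tighten: your set $X$ depends on the partition $\p$, so the identity $\Omega(\wt{h})=R(\wt{h})=\{\mu:\mu(X)=1\}$ should be read as holding for each $\p$ from property~(P) separately; the converse direction of~(c) then needs $\mu(X(\p_n))=1$ along a null sequence $(\p_n)$, which your Lyapunov argument indeed delivers for every such $\p_n$. With that clarification, your treatment of (a), (b), (d), and (e) --- all by direct adaptation of the corresponding steps in Theorem~\ref{PeriodicC}, using both left and right loops of the dumbbells --- is in line with the paper's.
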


\begin{proof}
Let $h \in \homcantor$ satisfy property (P). We choose a partition $\p$
of $\cantor$ such that every component of $\gr(h,\p)$ is a balanced dumbbell
with plate weight $q!$. Let
$$
D_i := \{u_{i,1},\ldots,u_{i,q!}\} \cup \{v_{i,1},\ldots,v_{i,s_i}\}
  \cup \{w_{i,1},\ldots,w_{i,q!}\} \ \ \ (1 \leq i \leq N)
$$
be the components (dumbbells) of $\gr(h,\p)$.
We claim that, for every $\mu \in \Omega(\wt{h})$,
\begin{equation}
\mu(v_{i,j}) = 0 \ \ \text{ and } \ \ \mu(h^{-n}(v_{i,1})) = 0 \
  \text{ for all } i, j \text{ and } n. \label{1}
\end{equation}
Indeed, let us fix $1 \leq i \leq N$ and $m \in \N$. It is enough to prove
that $\mu(h^{-m}(v_{i,s_i})) = 0$. For this purpose, let $b$ be the unique
element of $\p$ that contains $h^{-m}(v_{i,s_i})$. If $b$ is a certain
$v_{i,j}$, then $h^{-m}(v_{i,s_i}) = v_{i,j}$ and we define $\p':= \p$.
Otherwise, we define $\p'$ as the partition of $\cantor$ obtained from
$\p$ by replacing $b$ by the sets $h^{-m}(v_{i,s_i})$ and
$b \backslash h^{-m}(v_{i,s_i})$. Let $0 < \delta < \delta(\p')$.
Since $\mu$ is a nonwandering point of $\wt{h}$, there exist $t \in \N$
and $\nu \in \cM(\cantor)$ such that
$$
d_P(\nu,\mu) < \frac{\delta}{2} \ \ \ \ \text{ and } \ \ \ \
d_P(\wt{h}^{\, t}(\nu),\mu) < \frac{\delta}{2}\cdot
$$
In particular, $d_P(\wt{h}^{\, t}(\nu),\nu) < \delta$. By Lemma~\ref{Lemma1},
$$
|\nu(h^{-nt}(a)) - \nu(a)| < n\delta \ \
  \text{ for all } a \in \p \text{ and } n \in \N.
$$
Let $k \in \N$ be the smallest positive integer such that
$h^{-kt}(w_{i,j}) \supset w_{i,j}$ for all $j$. There are $1 \leq j \leq q!$
and $1 \leq n \leq m$ such that
$$
h^{-nkt}(w_{i,j}) \supset h^{-m}(v_{i,s_i}).
$$
Since $h^{-nkt}(w_{i,j}) \supset w_{i,j}$ and
$|\nu(h^{-nkt}(w_{i,j})) - \nu(w_{i,j})| < nk\delta$,
we conclude that
$$
\nu(h^{-m}(v_{i,s_i})) < nk\delta \leq mq!\delta.
$$
Since $d_P(\nu,\mu) < \delta < \delta(\p')$, Lemma~\ref{Lemma1} implies
that
$$
|\nu(h^{-m}(v_{i,s_i})) - \mu(h^{-m}(v_{i,s_i}))| < \delta.
$$
Thus, $\mu(h^{-m}(v_{i,s_i})) < (mq! + 1)\delta$. Since $\delta > 0$ can be
chosen arbitrarily small,
$$
\mu(h^{-m}(v_{i,s_i})) = 0,
$$
as was to be shown.

\smallskip
Let $\mu \in \Omega(\wt{h})$. Given $\eps > 0$, we may assume that $\p$ was
chosen with $\mesh \p < \eps$. It follows from (\ref{1}) that
$$
\mu(h^{-q!}(a)) = \mu(a) \ \ \text{ for all } a \in \p.
$$
Hence, $d_P(\wt{h}^{\, q!}(\mu),\mu) < \eps$ by Lemma~\ref{Lemma2},
which shows that $\mu \in R(\wt{h})$. On the other hand, it follows
immediately from Theorem~\ref{ChainMixing} that $CR(\wt{h}) = \cM(M)$
for all $h \in \cH(M)$. This proves property (c).

\smallskip
Let $\mu \in \cM(\cantor)$ be arbitrary and choose a point $z \in v_{1,1}$.
For each $\lambda \in (0,1)$, define
$$
\mu_\lambda:= (1 - \lambda) \mu + \lambda \pi_z.
$$
Then, $\mu_\lambda \not\in \Omega(\wt{h})$ (because of (\ref{1})) and
$d_P(\mu_\lambda,\mu) \leq \lambda$ (by Lemma\ref{Lemma4}).
This implies pro\-perty~(d).

\smallskip
Now, let $\mu \in R(\wt{h})$ and $\eps > 0$. We may assume that $\p$ was
chosen with $\mesh \p < \eps$. Fix a number $\delta$ satisfying
$0 < \delta < \delta(\p)$. Since $\mu$ is a recurrent point of $\wt{h}$,
there exists $p \in \N$ such that $d_P(\wt{h}^{\, p}(\mu),\mu) < \delta$.
Hence, it follows from Lemma~\ref{Lemma1} that
\begin{equation}
|\mu(h^{-np}(a)) - \mu(a)| < n\delta \ \
  \text{ for all } a \in \p \text{ and } n \in \N. \label{2}
\end{equation}
As in the proof of Step 6 in Theorem~\ref{PeriodicC},
each ``right loop'' $\{w_{i,1},\ldots,w_{i,q!}\}$ can be partitioned in sets
$$
\cA_{i,r}:= \{a_{i,r,0},a_{i,r,1},\ldots,a_{i,r,k-1}\} \ \ \ (1 \leq r \leq q!/k)
$$
satisfying
\begin{equation}
a_{i,r,t} = h^{-tp}(a_{i,r,0}) \cap W_i \
  \text{ for } 1 \leq t \leq k - 1 \label{3}
\end{equation}
and
\begin{equation}
a_{i,r,0} = h^{-kp}(a_{i,r,0}) \cap W_i, \label{4}
\end{equation}
where $W_i:= w_{i,1} \cup \ldots \cup w_{i,q!}$ ($1 \leq i \leq N$).
By (\ref{1}), (\ref{2}) and (\ref{3}), $\mu$ is almost constant on
$\cA_{i,r}$. Define
$$
d_{i,r}:= \mu(a_{i,r,0}) + \mu(a_{i,r,1}) + \cdots + \mu(a_{i,r,k-1}).
$$
Then the average $d_{i,r}/k$ is very close to the values of $\mu$ on the
elements of $\cA_{i,r}$. More precisely, by choosing $\delta$ small enough,
we can make the numbers
$$
\Big| \frac{d_{i,r}}{k} - \mu(a_{i,r,t}) \Big|
$$
as small as we want.
Now, let us look at the ``left loop'' $\{u_{i,1},\ldots,u_{i,q!}\}$.
Each of these ``loops'' can be partitioned in sets
$$
\cB_{i,r}:= \{b_{i,r,0},b_{i,r,1},\ldots,b_{i,r,k-1}\} \ \ \ (1 \leq r \leq q!/k)
$$
satisfying
\begin{equation}
h^{-tp}(b_{i,r,0}) \subset b_{i,r,t} \
  \text{ for } 1 \leq t \leq k - 1 \label{5}
\end{equation}
and
\begin{equation}
h^{-kp}(b_{i,r,0}) \subset b_{i,r,0}. \label{6}
\end{equation}
Note that
$$
h^{-kp}(u_{i,j}) \subset u_{i,j} \ \ \ (1 \leq i \leq N, 1 \leq j \leq q!).
$$
Moreover, by (\ref{1}),
$$
\mu\big(u_{i,j} \backslash h^{-kp}(u_{i,j})\big) = 0.
$$
This fact together with (\ref{2}) and (\ref{5}) imply that $\mu$ is almost
constant on $\cB_{i,r}$. Define
$$
e_{i,r}:= \mu(b_{i,r,0}) + \mu(b_{i,r,1}) + \cdots + \mu(b_{i,r,k-1}).
$$
Then the average $e_{i,r}/k$ is very close to the values of $\mu$ on the
elements of $\cB_{i,r}$. More precisely, by choosing $\delta$ small enough,
we can make the numbers
$$
\Big| \frac{e_{i,r}}{k} - \mu(b_{i,r,t}) \Big|
$$
as small as we want. Now, by making a construction similar to the one
in Step 1 of Theorem~\ref{PeriodicC}, we obtain uncountably many periodic
points $\mu_{i,r}$ of $\wt{h}$ of period $p$ such that
$$
\mu_{i,r}(a_{i,r,0}) = \mu_{i,r}(a_{i,r,1}) = \cdots = \mu_{i,r}(a_{i,r,k-1})
$$
and
$$
\mu_{i,r}(a) = 0 \ \text{ for all }
  a \in \p \backslash \{a_{i,r,0},a_{i,r,1},\ldots,a_{i,r,k-1}\}.
$$
Similarly, we can construct uncountably many periodic points $\nu_{i,r}$
of $\wt{h}$ of period $p$ such that
$$
\nu_{i,r}(b_{i,r,0}) = \nu_{i,r}(b_{i,r,1}) = \cdots = \nu_{i,r}(b_{i,r,k-1})
$$
and
$$
\nu_{i,r}(b) = 0 \ \text{ for all }
  b \in \p \backslash \{b_{i,r,0},b_{i,r,1},\ldots,b_{i,r,k-1}\}.
$$
As in the proof of Step 6 of Theorem~\ref{PeriodicC}, we see that
uncountably many measures of the form
$$
\mu':= \sum_{i=1}^N \sum_{r=1}^{q!/k} (d_{i,r}\mu_{i,r} + e_{i,r}\nu_{i,r})
$$
are periodic points of $\wt{h}$ of period $p$. Moreover, by choosing
$\delta$ small enough, Lemma~\ref{Lemma2} implies that each of these
measures satisfies $d_P(\mu',\mu) < \eps$. This establishes pro\-perty~(e).
In the case $\mu$ is already a periodic point of $\wt{h}$, of period $t$
say, then we can choose $p$ to be any multiple $kt$ of $t$, and so
we obtain uncountably many periodic points of $\wt{h}$ of period $kt$
in the $\eps$-neighborhood of $\mu$. This gives property~(b).

\smallskip
Finally, property (a) follows from the fact that a construction similar
to the one in Step~1 of Theorem~\ref{PeriodicC} can be made in the present
context, as was already mentioned above.
\end{proof}


\smallskip
\noindent Nilson C. Bernardes Jr.\\
{\it Departamento de Matem\'atica Aplicada, Instituto de Matem\'atica,
Universidade Federal do Rio de Janeiro, Caixa Postal 68530,
Rio de Janeiro, RJ, 21945-970, Brasil.}\\
{\it e-mail:} ncbernardesjr@gmail.com       

\smallskip
\noindent R\^omulo M. Vermersch\\
{\it Departamento de Tecnologias e Linguagens, Instituto Multidisciplinar,
Universidade Fede\-ral Rural do Rio de Janeiro, Av.\ Governador Roberto
Silveira s/n, Nova Igua\c cu, RJ, 26020-740, Brasil.}\\
{\it e-mail:} romulo.vermersch@gmail.com

\end{document}